\newtheorem{theo}{Theorem}[section]
\newtheorem{prop}[theo]{Proposition}
\newcommand{\be}{\begin{eqnarray}}
\newcommand{\ee}{\end{eqnarray}}
\newcommand{\bes}{\begin{eqnarray*}}
\newcommand{\ees}{\end{eqnarray*}}
\newcommand{\bi}{\begin{itemize}}
\newcommand{\ei}{\end{itemize}}
\newcommand{\ben}{\begin{enumerate}}
\newcommand{\een}{\end{enumerate}}
\newcommand{\La}{\mathcal{L}}
\newcommand{\E}{\mathcal{E}}
\newcommand{\R}{\mathbb{R}}
\newcommand{\N}{\mathbb{N}}
\newcommand{\G}{\mathcal{G}}
\newcommand{\de}{\mathrm {d}}
\newcommand{\Lip}{\mathrm {Lip}}
\def\Ext{{\hbox{\rm Ext}}}
\def\einschr{\hbox{\kern1pt\vrule height 6pt\vrule  width6pt height 0.4pt depth0pt\kern1pt}}
\newcommand {\no} {\noindent}
\newcommand{\enne}{{(n)}}
\DeclareMathOperator{\dive}{div}
\DeclareMathOperator{\Intern}{Int}
\title{\bf  Singular $p$-homogenization for highly conductive fractal layers  }
\date{}
\begin{document}
\maketitle

\centerline{\scshape Simone Creo}
\medskip
{\footnotesize

 \centerline{Dipartimento di Scienze di Base e Applicate per l'Ingegneria, Sapienza Universit\`{a} di Roma,
}
   \centerline{Via A. Scarpa 16,}
   \centerline{00161 Roma, Italy.}
}

\vspace{1cm}

\begin{center}

\small

\noindent First published in: Creo Simone, \lq\lq Singular $p$-homogenization for highly conductive fractal layers". Z. Anal. Anwend., in press (2021). \copyright\,European Mathematical Society.

\end{center}

\vspace{0.5cm}

\begin{abstract}
\noindent We consider a quasi-linear homogenization problem in a two-dimensional pre-fractal domain $\Omega_n$, for $n\in\N$, surrounded by thick fibers of amplitude $\varepsilon$. We introduce a sequence of $\lq\lq$pre-homogenized" energy functionals and we prove that this sequence converges in a suitable sense to a quasi-linear fractal energy functional involving a $p$-energy on the fractal boundary. We prove existence and uniqueness results for (quasi-linear) pre-homogenized and homogenized fractal problems. The convergence of the solutions is also investigated.
\end{abstract}

\medskip

\noindent\textbf{Keywords:} Homogenization, Fractal domains, Quasi-linear problems, M-convergence, Venttsel' boundary conditions.\\

\noindent{\textbf{2010 Mathematics Subject Classification:} Primary: 35J62, 35B27. Secondary: 35B40, 74K15, 28A80.}

\bigskip

\section*{Introduction}
\setcounter{equation}{0}

In the last decades, the study of different boundary value problems in domains with irregular boundary (or presenting irregular interfaces) has been of great interest. This is due to the fact that many industrial processes and natural phenomena occur across highly irregular media. It turns out that fractals can well model such irregular geometries; we remark that fractal sets are constructed by an iterative process.\\
Different problems on various fractal domains have been studied in the literature: among the others, we refer to \cite{lan-zei,moscoviv,lanciaviv,lanviv2,LRDV,frazionario}. In particular, in the latest years the focus has been on the so-called \emph{Venttsel' problems}, known in literature also as problems with dynamical boundary conditions.\\
Mathematically, Venttsel' problems are characterized by an unusual boundary condition: the operators governing the diffusion in the bulk and on the boundary are of the same order. For the literature on Venttsel' problems in piecewise smooth or fractal domains, from linear to quasi-linear, from local to nonlocal, we refer to \cite{JEE,CPAA,miovalerio,LVSV,nostromassimo,ambprodi,nostronazarov,CLNFCAA}.\\
In the linear smooth case, it is by now well known that Venttsel' problems can be seen as the limit of suitable homogenization problems. In particular, as in the seminal paper \cite{sanpal} (see also \cite{Att,FLLM}), the authors consider a boundary value problem in a domain containing a thin strip of large conductivity. If the product between the thickness and the conductivity of the strip has finite non-zero limit, it is proved that the limit problem is of Venttsel' type.\\
This result, in the linear case, has been extended to the case of fractal-type domains; among the others, we refer to \cite{lanmos,vanishing,mosvivthin,moscoviv2015,capvivsiam}. The authors consider transmission problems in regular domains of $\R^N$, containing a pre-fractal interface coated with a thin fiber of amplitude $\varepsilon$ and they prove, under suitable structural assumptions on the fiber, that the limit problem presents a $\lq\lq$Venttsel'-type" condition on the limit fractal interface.\\
The aim of the present paper is to prove that quasi-linear $\lq\lq$pure" Venttsel' problems in two-dimensional fractal domains can be seen as limit of suitable homogenization problems, thus generalizing the results known for the smooth quasi-linear case considered in \cite{aitmoussa} and providing a better physical ground to Venttsel' problems. To our knowledge, the present paper is the first example of homogenization of quasi-linear problems in the fractal case. We point out that quasi-linear homogenization problems model nonlinear thermal conduction in highly conductive thin structures.

%\begin{equation}
%I_0u(x,y)=u(A)\left(1-x-\frac{y}{\sqrt{3}}\right)+u(B)\left(x-\frac{y}{\sqrt{3}}\right)+\frac{2y}{\sqrt{3}}u(C)
%\end{equation}

The key issue is to suitably construct the fiber of thickness $\varepsilon$ in order to obtain in the limit the Venttsel' boundary condition. In the smooth case, the usual homogenization technique is to approximate a one-dimensional infinitely conductive thin layer by a two-dimensional thin layer of vanishing thickness $\varepsilon$ and increasingly high conductivity $a_\varepsilon$.
However, the construction of an $\varepsilon$-neighborhood for a fractal layer is tricky.\\
Following \cite{moscovivgeo} (see also \cite{lanmos}), we construct a thin fiber around pre-fractal domains. Pre-fractal domains have piecewise smooth boundary of polygonal type, and they depend on a natural parameter $n\in\N$, denoting the order of the iteration process in the construction of the fractal.

More precisely, we formally state the pre-homogenized problem as follows:
\begin{equation*}
(P_\varepsilon^n)
\begin{cases}
-\dive(a_\varepsilon^n(x,y)|\nabla u|^{p-2}\nabla u)+|u|^{p-2}u=f &\text{in}\,\,\Omega_\varepsilon^n,\\[2mm]
%u=0 &\text{on}\,\,\partial\Omega_\varepsilon^n,\\[2mm]
[u]=0 &\text{on}\,\,\partial\Omega_n\text{ and on}\,\,\Gamma_\varepsilon^n,\\[2mm]
\textrm{suitable transmission conditions}\quad &%\text{on}\,\,\partial\Omega_n\text{ and on}\,\,\Gamma_\varepsilon^n,
\end{cases}
\end{equation*}%
where $\Omega_\varepsilon^n$ is a suitable pre-fractal domain $\Omega_n$ surrounded by a fiber of thickness $\varepsilon$ sufficiently small (see Section \ref{preliminari} for the details), $f$ is a given function in a suitable Lebesgue space, $\Gamma_\varepsilon^n$ will be suitably defined in Section \ref{preliminari} and $a^n_\varepsilon(x,y)$ is the conductivity of $\Omega^n_{\varepsilon}$ which will be defined later. Problem $(P_\varepsilon^n)$ presents also sophisticated transmission conditions which will be satisfied in a suitable weak sense, see Section \ref{exunconv}.\\
We introduce a sequence of quasi-linear energy functionals defined on $\Omega_\varepsilon^n$. Our aim is to prove that the pre-homogenized functionals \emph{M-converge} to a limit fractal energy functional. This convergence is very delicate since it is driven by two parameters $n$ and $\varepsilon$ and we are interested in the limit as $\varepsilon\to 0$ and $n\to+\infty$ simultaneously. The main tools are those of fractal analysis and homogenization: e.g., harmonic extensions obtained by decimation and ad-hoc interpolation and average-value operators on the fibers. Moreover, a crucial role will be played by the choice of the conductivity $a_\varepsilon^n$, which will be singular and discontinuous.\\
After proving the M-convergence of the pre-homogenized functionals, we prove that both the pre-homogenized and the fractal homogenized problems admit unique weak solutions in suitable functional spaces. We point out that the homogenized problem will involve a Venttsel' boundary condition on the fractal boundary. Moreover, from the M-convergence of the functionals we deduce the convergence of the pre-homogenized solutions to the limit fractal homogenized one as $\varepsilon\to 0$ and $n\to+\infty$.

The paper is organized as follows. In Section \ref{preliminari}, we construct the domain $\Omega_\varepsilon^n$ and we introduce the functional setting of this work. In Section \ref{mconv}, we introduce the functionals and we prove that the pre-homogenized functionals M-converge to the fractal functional. Finally, in Section \ref{exunconv}, we prove existence and uniqueness results for both the pre-homogenized and fractal problems and the convergence of the pre-homogenized solutions to the limit fractal one.

\section{Preliminaries}\label{preliminari}
\setcounter{equation}{0}

Throughout this work let $p\geq 2$. We start by introducing the geometry of the problem.\\
Let $T^0$ be the boundary of the triangle of vertices $A=(0,0)$, $B=(1,0)$ e $C=(\frac{1}{2},\frac{\sqrt{3}}{2})$ and let $V_0=\{A,B,C\}$. We construct a bounded (open) domain $\Omega\subset\R^2$ having as fractal boundary $\partial\Omega=K$ the Koch snowflake; we point out that $K$ can be seen as the union of three com-planar Koch curves $K^i$, for $i=1,2,3$. $K^1$ is the uniquely determined self-similar set with respect to the following family of contractive similitudes $\Psi^{(1)}=\{\psi_{1}^{(1)},\psi_{2}^{(1)},\psi_{3}^{(1)},\psi_{4}^{(1)}\}$ (with respect to the same ratio $\frac{1}{3}$) to the side $AB$ of $T^0$:
\begin{center}
$\displaystyle\psi_1^{(1)} (z)=\frac{z}{3},\qquad\psi_2^{(1)} (z)=\frac{z}{3} e^{-i\pi/3}+\frac{1}{3}$,\\[3mm]
$\displaystyle\quad\psi_3^{(1)} (z)=\frac{z}{3} e^{i\pi/3}+\frac{1}{2}-i\frac{\sqrt{3}}{6},
\quad\psi_4^{(1)} (z)=\frac{z+2}{3}.$
\end{center}
We construct in a similar way the curves $K^2$ and $K^3$ as the uniquely determined self-similar sets with respect to suitable families of contractive similitudes $\Psi^{(2)}$ and $\Psi^{(3)}$ respectively. For more details on the construction of $\Omega$ and on the properties of the Koch snowflake, we refer to \cite{freiberg} and \cite{falconer} respectively.

For every $n\in\N$, let $\Omega_n$ be the approximating domain having as boundary $K_n$ the $n$-th approximation of $K$. We point out that every $\Omega_n$ is a bounded polygonal non-convex domain; moreover, the internal angles of the boundary have amplitude equal either to $\frac{\pi}{3}$ or $\frac{4\pi}{3}$. We also denote by $\mathcal{V}^n$ the set of vertices of the polygonal curve $K_n$ and by $\mathcal{V}_{\star}:=\cup_{n\geq 1}\mathcal{V}^n$. We point out that $K=\overline{\mathcal{V}_{\star}}$.

We now introduce the fibers that we will construct around our domain $\Omega_n$. Let $\varepsilon_0=\frac{1}{2}\tan\frac{\pi}{12}$. We denote by $S_1$, $S_2$ and $S_3$ the segments having endpoints $A$ and $B$, $B$ and $C$, and $A$ and $C$ respectively. On every $S_j$ we introduce an $\varepsilon$-neighborhood $\Sigma_{j,\varepsilon}$, for every $0<\varepsilon<\varepsilon_0$ and $j=1,2,3$. More precisely, $\Sigma_{1,\varepsilon}$ is the open polygon having as vertices $A$, $B$, $P_1=(\frac{\varepsilon}{C_1},-\frac{\varepsilon}{2})$ and $P_2=(1-\frac{\varepsilon}{C_1},-\frac{\varepsilon}{2})$, where $C_1=2\varepsilon_0=\tan\frac{\pi}{12}$. We proceed similarly for constructing $\Sigma_{2,\varepsilon}$ and $\Sigma_{3,\varepsilon}$. We point out that every $\Sigma_{j,\varepsilon}$ can be decomposed into the union of a rectangle $\mathcal{R}_{l,\varepsilon}$ and two triangles $\mathcal{T}_{1,l,\varepsilon}$ and $\mathcal{T}_{2,l,\varepsilon}$.\\
We now construct a larger fiber $\Sigma_{j,2\varepsilon}$ of thickness $\varepsilon$, for $j=1,2,3$. For $j=1$, $\Sigma_{1,2\varepsilon}$ is the open polygon of vertices $A$, $B$, $Q_1=(\frac{\varepsilon}{C_1},-\varepsilon)$ and $Q_2=(1-\frac{\varepsilon}{C_1},-\varepsilon)$. We proceed analogously for the construction of $\Sigma_{2,2\varepsilon}$ and $\Sigma_{3,2\varepsilon}$. Obviously $\Sigma_{j,2\varepsilon}$ contains the fiber $\Sigma_{j,\varepsilon}$.\\
We define
\begin{equation*}
\Sigma_{2\varepsilon}=\bigcup_{j=1}^3 \Sigma_{j,2\varepsilon},\qquad \Sigma_{\varepsilon}=\bigcup_{j=1}^3 \Sigma_{j,\varepsilon}.
\end{equation*}
We iterate this procedure on every segment of the pre-fractal curve $K_n$ and we construct two sequences of fibers $\Sigma^n_{\varepsilon}$ and $\Sigma^n_{2\varepsilon}$ respectively.\\
We now introduce a weight $w_\varepsilon^n$ on $\Sigma^n_\varepsilon$. For $i_1,\dots,i_n\in\{1,2,3,4\}$, we denote by $\psi_{i|n}:=\psi_{i_1}\circ\dots\circ\psi_{i_n}$ and, for any measurable set $A$, we set $A^{i|n}:=\psi_{i|n}(A)$. Let $P$ be a point belonging to $\partial\Sigma_{l,\varepsilon}^{i|n}\setminus K_n$ and let $P^\bot$ be its orthogonal projection on $S_l^{i|n}$. Let $(x,y)$ belong to the segment of endpoints $P$ and $P^\bot$. Then% ({\bf PROPOSTA MODIFICA!})
\begin{equation}\label{definizionewn}
w^n_\varepsilon(x,y)=
\begin{cases}
\frac{2^p+C_1^p}{2|P-P^\bot|}\quad &\text{if}\,\,(x,y)\in\mathcal{T}^{i|n}_{j,l,\varepsilon},\,j=1,2,\\[2mm]
\frac{1}{|P-P^\bot|} &\text{if}\,\,(x,y)\in\mathcal{R}^{i|n}_{l,\varepsilon}.
\end{cases}
\end{equation}
The weights $w^n_\varepsilon$ enjoy an important property. We recall that a function $w$ belongs to the Muckenhopt class $A_p$ \cite{muck} if there exists a positive constant $C$ such that for every ball $B\subset\R^2$ it holds that
\begin{equation}\label{condizione muck}
\left(\int_B w\,\de\La_2\right)\cdot\left(\int_B |w|^{-\frac{1}{p-1}}\,\de\La_2\right)^{p-1}\leq C|B|^p.
\end{equation}
As in \cite{MVARMA}, for fixed $\varepsilon$ and $n$, the weights $w_\varepsilon^n$ belong to the class $A_2$, and the constant $C$ in \eqref{condizione muck} can be taken independent from $n$ and $\varepsilon$. Since $p\geq 2$, from H\"older inequality this implies that $w_\varepsilon^n\in A_p$ for fixed $\varepsilon$ and $n$.

%for example the ball of center $(\frac{1}{2},\frac{\sqrt{3}}{6})$ and radius 1

We set (see Figure \ref{dominio}) $$\Omega^n_\varepsilon=\Intern(\overline{\Omega_n}\cup\Sigma^n_{2\varepsilon}),$$ where $\Intern(A)$ denotes the interior of a set $A$. We denote by $\Omega^*$ the triangle of vertices $D=(\frac{1}{2},-\frac{\sqrt{3}}{2})$, $E=(\frac{3}{2},\frac{\sqrt{3}}{2})$ and $F=(-\frac{1}{2},\frac{\sqrt{3}}{2})$; we point out that $\Omega^*$ contains $\Omega$ and $\Omega^n_\varepsilon$ for every $n\in\N$ and $0<\varepsilon<\varepsilon_0$. Moreover, let $\Gamma_\varepsilon^n=\partial\Sigma_\varepsilon^n\setminus K_n$.

\begin{figure}[h!]
\centering
\includegraphics[scale=0.75]{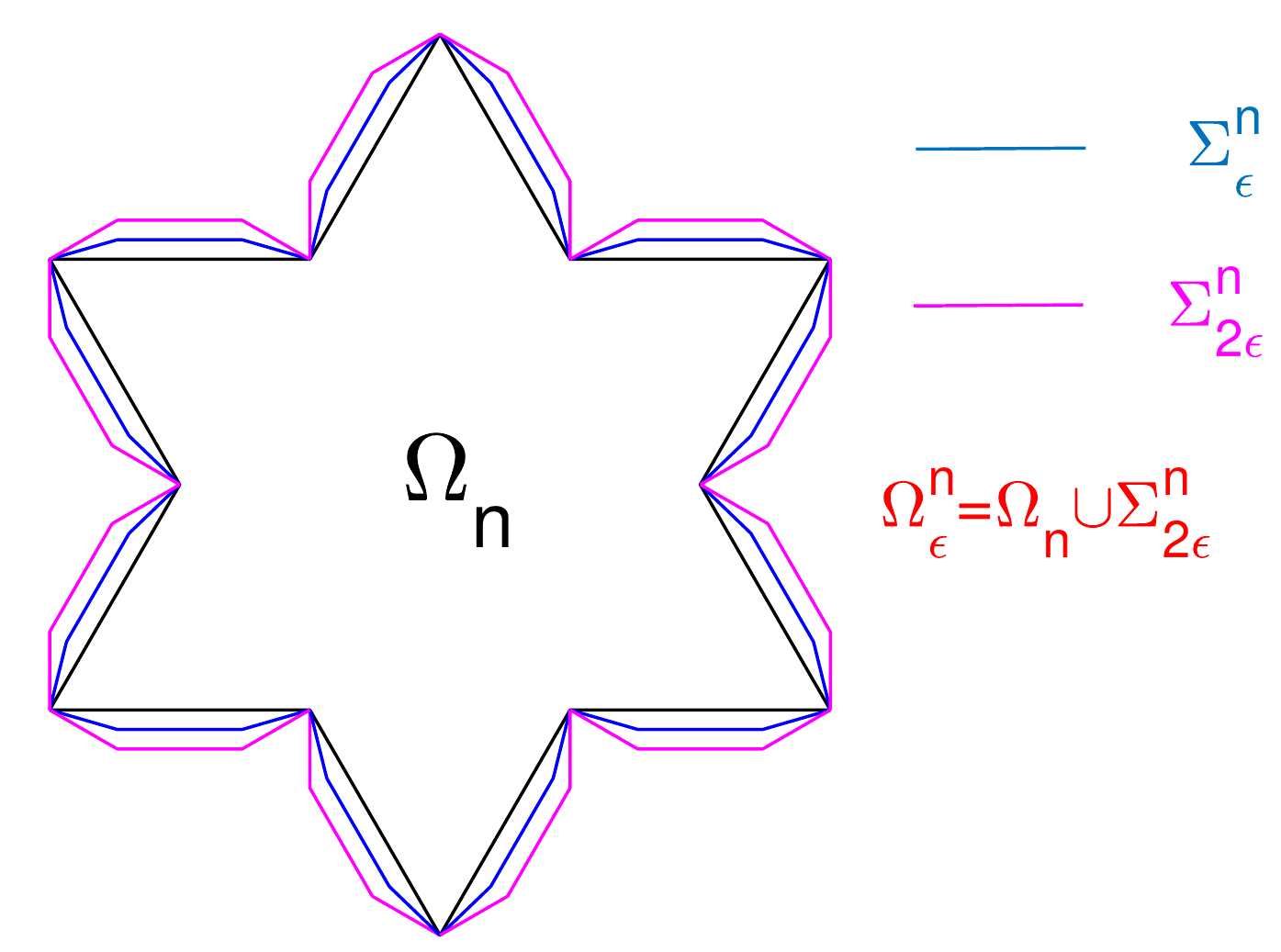}%
\caption{The domain $\Omega_\varepsilon^n$.}%
\label{dominio}
\end{figure}

We can define, in a natural way, a finite Borel measure $\mu$ supported on $K$ by
\begin{equation}\label{eq:1}
\mu:=\mu_1+\mu_2+\mu_3,
\end{equation}
where $\mu_i$ denotes the normalized $d_f$-dimensional Hausdorff measure restricted to $K^i$, $i=1,2,3$, where $d_f=\frac{\log 4}{\log 3}$ is the Hausdorff measure of $K$.\\
For the measure $\mu$ there exist two positive constants $c_1$ and $c_2$ such that
\begin{equation}\label{eq:2}
c_1\,r^d\leq \mu(B(P,r)\cap K)\leq c_2\,r^d,\ \forall\,P\in K,
\end{equation}
where $d=d_f$ and $B(P,r)$ denotes the Euclidean ball of center $P$ and radius $r$. Since $\mu$ is supported on $K$, we can write $\mu(B(P,r))$ in \eqref{eq:2} in place of $\mu(B(P,r)\cap K)$. We note that, in the terminology of \cite{JoWa}, from \eqref{eq:2} it follows that $K$ is a $d_f$-set and the measure $\mu$ is a $d_f$-measure.

\bigskip

By $L^p(\cdot)$ we denote the Lebesgue space with respect to the Lebesgue measure $\de\La_2$ on subsets of $\R^2$, which will be left to
the context whenever that does not create ambiguity. By $L^p(K,\mu)$ we denote the Banach space of $p$-summable functions on $K$ with respect to the invariant measure $\mu$. By $\ell$ we denote the natural arc length coordinate on each edge of $K_n$ and we introduce the coordinates $x=x(\ell)$, $y=y(\ell)$, on every  segment of $K_n$. By $\de\ell$ we denote the one--dimensional measure given by the arc length $\ell$.\\
Let $\G$ be an open set of $\R^2$. By $W^{s,p}(\G)$, where $s\in \R^+$, we denote the (possibly fractional) Sobolev spaces (see \cite{necas}).
Given $\mathcal{S}$ a closed set of $\R^2$, by $C^{0,\alpha}(\mathcal{S})$ we denote the space of H\"older continuous functions on $\mathcal{S}$ of exponent $\alpha$. 

\bigskip

The domains $\Omega_n$ are $(\epsilon, \delta)$ domains with parameters $\epsilon$ and $\delta$ independent of the (increasing) number of sides of $K_n$ (see Lemma 3.3 in \cite{capCPAA}). Thus, by the extension theorem for $(\epsilon, \delta)$ domains due to Jones (Theorem 1 in \cite{Jones}), we obtain the following theorem, which provides an extension operator from $W^{1,p}(\Omega_n)$ to the space ${W^{1,p}(\R^2)}$ whose norm is independent of $n$ (see Theorem 5.7 in \cite{capJMAA}).
\begin{theorem}\label{exte}
There exists a bounded linear extension operator $\Ext_J:\,W^{1,p}(\Omega_n)\to\,W^{1,p}(\R^2)$ such that
\begin{equation}\label{eeea}
\|\Ext_J\, v\|^p_{W^{1,p}(\R^2)} \leq C_J\|v\|^p_{W^{1,p}(\Omega_n)}
\end{equation}
with $C_J$ independent of $n$.
\end{theorem}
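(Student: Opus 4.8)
The statement to be proved is Theorem \ref{exte}: the existence of a bounded linear extension operator $\Ext_J:W^{1,p}(\Omega_n)\to W^{1,p}(\R^2)$ with norm bound $C_J$ independent of $n$. The plan is to invoke Jones's extension theorem for $(\varepsilon,\delta)$-domains (Theorem 1 in \cite{Jones}), whose operator norm depends only on the space dimension, on $p$, and on the geometric parameters $\varepsilon$ and $\delta$ of the domain. Since we need a bound uniform in $n$, the essential point is that the pre-fractal domains $\Omega_n$ are $(\varepsilon,\delta)$-domains \emph{with the same pair of parameters} $(\varepsilon,\delta)$ for every $n\in\N$; this uniformity is precisely what Lemma 3.3 in \cite{capCPAA} provides, since the snowflake pre-fractals are all built from the same fixed contractive similitudes of ratio $1/3$ and hence are self-similar in a way that keeps the relevant geometric constants from degenerating as the number of sides grows.

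First I would recall the definition of an $(\varepsilon,\delta)$-domain: $\G\subset\R^2$ is an $(\varepsilon,\delta)$-domain if for every pair of points $x,y\in\G$ with $|x-y|<\delta$ there is a rectifiable arc $\gamma\subset\G$ joining $x$ to $y$ with length at most $|x-y|/\varepsilon$ and such that $\dist(z,\partial\G)\ge \varepsilon\,|x-z|\,|y-z|/|x-y|$ for all $z\in\gamma$. Next I would cite Lemma 3.3 of \cite{capCPAA} to assert that each $\Omega_n$ satisfies this with parameters $\varepsilon$ and $\delta$ that can be chosen once and for all, independently of $n$; one could briefly indicate why this holds — the internal angles of $K_n$ take only the two values $\pi/3$ and $4\pi/3$, so the worst local cusp/corner geometry is the same at every generation, and the only scale that shrinks with $n$ is $3^{-n}$, which affects $\delta$ but can be absorbed since $\delta$ need only be positive and the domain has fixed diameter. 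Then I would apply Jones's Theorem 1 on each $\Omega_n$ to obtain an extension operator $\Ext_J^{(n)}$ with $\|\Ext_J^{(n)}\|\le C(2,p,\varepsilon,\delta)=:C_J^{1/p}$, and since $\varepsilon,\delta$ do not depend on $n$, neither does $C_J$. Raising the resulting Sobolev-norm inequality to the $p$-th power yields \eqref{eeea}. For the linear operator statement it suffices to note that Jones's construction is linear in the extended function.

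The main obstacle — indeed essentially the only nontrivial content — is the $n$-uniformity of the $(\varepsilon,\delta)$-parameters, i.e. making sure that as $K_n$ acquires more and more sides and sharper polygonal structure, the domain does not fail to be an $(\varepsilon,\delta)$-domain with controlled constants. This is handled not by a new argument here but by quoting Lemma 3.3 of \cite{capCPAA} (and ultimately relies on the self-similar structure of the Koch construction and the boundedness of $\Omega^*\supset\Omega_n$). Everything else — the passage from the abstract extension theorem to the stated norm inequality, and the raising to the $p$-th power — is routine, so the proof is short: state the definition, cite the uniform $(\varepsilon,\delta)$-property, apply Jones, collect constants.
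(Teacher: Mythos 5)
Your proposal is correct and follows essentially the same route as the paper: the paper likewise obtains the result by citing Lemma 3.3 of \cite{capCPAA} for the $(\epsilon,\delta)$-domain property of $\Omega_n$ with parameters independent of $n$ and then applying Jones's extension theorem (Theorem 1 in \cite{Jones}, see also Theorem 5.7 in \cite{capJMAA}), so the uniformity of $C_J$ comes exactly from the $n$-independence of the geometric parameters, as you argue.
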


\bigskip

We recall a Green formula for Lipschitz domains (see \cite{bregil} and \cite{baiocap}). Let $\mathcal{D}$ be a Lipschitz domain and let $(L^{p'}_{\dive}(\mathcal{D}))^2:=\{w\in (L^{p'}(\mathcal{D}))^2\,:\, \dive w \in L^{p'}(\mathcal{D}) \}$. Then, for every $u,v\in W^{1,p}(\mathcal{D})$ such that $w:=|\nabla{u}|^{p-2}\nabla u\in (L^{p'}_{\dive}(\mathcal{D}))^2$, since $\Delta_p u=\dive(|\nabla u|^{p-2}\nabla u)$, it holds

 $$\int_{\mathcal{D}} |\nabla u|^{p-2}\nabla u\nabla v\,\de\La_2=
 \left\langle\frac{\partial u}{\partial\nu}|\nabla u|^{p-2}, v\right\rangle_{_{W^{-\frac{1}{p'},p'}(\partial\mathcal{D}),W^{\frac{1}{p'},p}(\partial\mathcal{D})}}-\int_{\mathcal{D}} \Delta_p u\,v\,\de\La_2.
$$

\bigskip

We now introduce Besov spaces on the fractal set $K$. From now on, we set $\alpha=1-\frac{2-d_f}{p}$. We define the Besov space on $K$ only for this particular $\alpha$, which is the case of our interest. For a general treatment, see \cite{JoWa}.
\begin{definition}
Let $\mu$ be the measure introduced in \eqref{eq:1} and \eqref{eq:2}. We say that $f \in B_{\alpha}^{p,p}(K)$ if $f \in L^p(K,\mu)$ and
\begin{center}
 $ \Vert f \Vert_{B_{\alpha}^{p,p}(K)}<+\infty$,
\end{center}
where
\begin{equation}\label{eq9}
\Vert f\Vert_{B_{\alpha}^{p,p}(K)}=\Vert f \Vert_{L^p(K,\mu)}+\left(\int\int_{\vert P-P'\vert<1}\frac{\vert f(P)-f(P')\vert^p}{\vert P-P'\vert^{2d_f+p-1}}\de\mu(P)\de\mu(P')\right)^{\frac{1}{p}}
\end{equation}
\end{definition}

We recall a trace theorem.
\begin{theorem}\label{teo traccia Jonsson Wallin} $B_{\alpha}^{p,p}(K)$ is the trace space of $W^{1,p}(\Omega)$ that is:
\begin{enumerate}
\item[1)] There exists a linear and continuous operator $\gamma_0:W^{1,p}(\Omega)\rightarrow B_{\alpha}^{p,p}(K)$.
\item[2)] There exists a linear and continuous operator $\Ext:B_{\alpha}^{p,p}(K)\rightarrow W^{1,p}(\Omega)$ such that $\gamma_0 \circ\Ext$ is the identity operator on $B_{\alpha}^{p,p}(K)$.
\end{enumerate}
\end{theorem}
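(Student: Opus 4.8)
The plan is to prove Theorem \ref{teo traccia Jonsson Wallin} as a consequence of the general Jonsson--Wallin trace theory for $d$-sets, specialized to the Koch snowflake $K$ and the exponent $\alpha = 1 - \frac{2-d_f}{p}$. The crucial structural fact already recorded in the excerpt is that $K$ is a $d_f$-set and $\mu$ is a $d_f$-measure (this follows from the two-sided estimate \eqref{eq:2}), and that $\Omega$, being an $(\varepsilon,\delta)$-domain (indeed the limit case of the uniformly $(\epsilon,\delta)$ domains $\Omega_n$, cf.\ Theorem \ref{exte} and \cite{capCPAA,capJMAA}), supports the full machinery of Sobolev extension and restriction. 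I would therefore organize the argument in two halves, one for the trace operator $\gamma_0$ and one for the extension operator $\Ext$.

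First, for the existence of $\gamma_0$: since $\Omega$ is an $(\varepsilon,\delta)$-domain, Jones' extension theorem (Theorem \ref{exte}, in its version for fixed $\Omega$) gives a bounded linear extension $E\colon W^{1,p}(\Omega)\to W^{1,p}(\R^2)$. It then suffices to define a trace operator from $W^{1,p}(\R^2)$ to $B^{p,p}_\alpha(K)$ and compose. For this I would invoke the Jonsson--Wallin restriction theorem (\cite{JoWa}, Chapter VII): for a $d$-set $K\subset\R^2$ and $1<p<\infty$, the restriction operator is bounded from $W^{1,p}(\R^2)$ onto $B^{p,p}_{\beta}(K)$ with smoothness parameter $\beta = 1 - \frac{2-d}{p}$, which is precisely our $\alpha$ once $d=d_f$; the norm on the Besov side is exactly the one written in \eqref{eq9}, with the exponent $2d_f + p - 1 = 2d + p(1-\alpha) - (2-d) \cdot \tfrac{?}{}$ matching the normalization in \cite{JoWa} (one checks $2d_f+p-1$ is the correct weight for first-order smoothness on a $d_f$-set in $\R^2$). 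Setting $\gamma_0 := R \circ E$, where $R$ is this restriction, yields a linear continuous map $W^{1,p}(\Omega)\to B^{p,p}_\alpha(K)$; continuity is just the composition of the two operator bounds. One should remark that $\gamma_0 u$ agrees $\mu$-a.e.\ with the pointwise (Lebesgue) values of a quasicontinuous representative of $u$ on $K$, so that the definition is independent of the choice of extension $E$.

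Second, for the existence of the right inverse $\Ext$: here I would again use the Jonsson--Wallin theory, which provides a bounded linear extension operator $\Extcal\colon B^{p,p}_\alpha(K)\to W^{1,p}(\R^2)$ such that $R\circ\Extcal = \mathrm{id}$ on $B^{p,p}_\alpha(K)$ (this is the "extension" half of the trace theorem for $d$-sets, constructed via a Whitney decomposition of $\R^2\setminus K$ and a partition-of-unity reconstruction, using the $d_f$-regularity \eqref{eq:2} in an essential way to control the Whitney sums). Then define $\Ext := \rho \circ \Extcal$, where $\rho\colon W^{1,p}(\R^2)\to W^{1,p}(\Omega)$ is the (bounded) restriction of functions to $\Omega$. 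Boundedness of $\Ext$ is immediate. To see that $\gamma_0\circ\Ext = \mathrm{id}$ on $B^{p,p}_\alpha(K)$: for $f\in B^{p,p}_\alpha(K)$ we have $\Ext f = (\Extcal f)|_\Omega$, and applying the extension $E$ of Jones to $(\Extcal f)|_\Omega$ produces a function in $W^{1,p}(\R^2)$ whose quasicontinuous representative agrees $\mu$-a.e.\ on $K$ with that of $\Extcal f$; hence $\gamma_0(\Ext f) = R(\Extcal f) = f$. (Alternatively, and more cleanly, one notes that because $K=\partial\Omega$ lies in the closure of $\Omega$, the $\mu$-a.e.\ trace of the restriction $(\Extcal f)|_\Omega$ coincides with the restriction of $\Extcal f$ to $K$, bypassing the re-extension step.)

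The main obstacle — and the only genuinely non-routine point — is making precise the \emph{compatibility} between the Jonsson--Wallin trace on $W^{1,p}(\R^2)$ and the trace on $W^{1,p}(\Omega)$ defined through the extension $E$, i.e.\ checking that $\gamma_0$ does not depend on the chosen extension and that the two restriction notions coincide on $K$. This rests on the quasicontinuity of Sobolev functions and on the fact that $\mu$ does not charge sets of $W^{1,p}$-capacity zero (a consequence of \eqref{eq:2}, since the Hausdorff dimension $d_f = \log 4/\log 3 > 2 - p$ for $p\ge 2$, so $K$ has positive capacity and traces are $\mu$-a.e.\ well defined). Once this point is settled, everything else is a direct citation of \cite{JoWa} together with the $(\varepsilon,\delta)$-extension property already established for $\Omega$; so in the write-up I would state the theorem as essentially known, sketch the composition $\gamma_0 = R\circ E$ and $\Ext = \rho\circ\Extcal$, and refer to \cite{JoWa} and \cite{Jones,capCPAA,capJMAA} for the two ingredients.
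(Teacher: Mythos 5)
Your proposal is correct and follows essentially the same route as the paper, which simply refers to Theorem 1 of Chapter VII in \cite{JoWa}: the statement for $W^{1,p}(\Omega)$ is obtained exactly by combining the Jonsson--Wallin trace/extension theory for the $d_f$-set $K$ with the Sobolev extension property of the $(\epsilon,\delta)$-domain $\Omega$, as you do with $\gamma_0=R\circ E$ and $\Ext=\rho\circ\Extcal$. The compatibility point you single out (independence of the chosen extension, handled via the $\mu$-a.e.\ defined averaged trace and the interior measure-density property of $\Omega$) is indeed the only non-routine step, and it is settled in the cited literature.
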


\medskip
\no For the proof we refer to Theorem 1 of Chapter VII in \cite{JoWa}.

By proceeding as in Theorem 3.7 in \cite{LaVe2}, we can prove the following Green formula for fractal domains. If $w:=|\nabla u|^{p-2}\nabla u\in (L^{p'}_{\dive}(\Omega))^2:=\{w\in (L^{p'}(\Omega))^2\,:\, \dive w \in L^{p'}(\Omega) \}$, since $\Delta_p u=\dive(|\nabla u|^{p-2}\nabla u)$, then for $\alpha=1-\frac{2-d_f}{p}$
 $$\int_{\Omega} |\nabla u|^{p-2}\nabla u\nabla\psi\,\de\La_2=
 \left\langle\frac{\partial u}{\partial\nu}|\nabla u|^{p-2}, \psi\right\rangle_{_{(B^{p,p}_\alpha(K))',B^{p,p}_\alpha(K)}}-\int_{\Omega} \Delta_p u\psi\,\de\La_2
$$
for every $\psi\in W^{1,p}(\Omega)$. Here $(B^{p,p}_\alpha(K))'$ denotes the dual of the Besov space $B^{p,p}_\alpha(K)$ on $K$. This space as shown in \cite{JoWa2} coincides with a subspace of Schwartz distributions $D'(\R^2)$, which are supported on $K$. They are built by means of atomic decomposition. Actually, Jonsson and Wallin proved this result in the general framework of $d$-sets; we refer to \cite{JoWa2} for a complete discussion.

\section{M-convergence of quasi-linear functionals}\label{mconv}
\setcounter{equation}{0}

We introduce for $n\in\N$ and $0<\varepsilon<\varepsilon_0$ the energy functionals which we will consider in order to prove the convergence results.\\
We denote by $W^{1,p}(\Omega_\varepsilon^n,a_\varepsilon^n)$ the Sobolev space of restrictions to $\Omega_\varepsilon^n$ of functions $u$ defined on $\Omega^*$ for which the following norm is finite:
\begin{equation}\label{normapesata}
\|u\|_{W^{1,p}(\Omega_\varepsilon^n,a_\varepsilon^n)}^p:=\|u\|^p_{L^p(\Omega_\varepsilon^n)}+\int_{\Omega_\varepsilon^n}a_\varepsilon^n(x,y)|\nabla u|^p\,\de\La_2.
\end{equation}
%we consider also the space $W^{1,p}_0(\Omega_\varepsilon^n,a_\varepsilon^n)$ defined as the completion of $C^1_0(\Omega_\varepsilon^n)$ in the $W^{1,p}(\Omega_\varepsilon^n,a_\varepsilon^n)$-norm.\\
%We point out that $\Omega^*$ contains $\Omega_\varepsilon^n$ for every $n\in\N$ and $0<\varepsilon\leq\varepsilon_0$.
Let $\delta_n=\left(\frac{3}{4}\right)^n$. We introduce the following energy functionals defined on $L^2 (\Omega^*)$ %(TANTO $p\geq 2$)
\begin{equation}\label{funzionalepre}
\Phi_\varepsilon^\enne[u]:=
\begin{cases}
\frac{1}{p}{\displaystyle\int_{\Omega^n_\varepsilon}|u|^{p}\,\de\La_2}+\frac{1}{p}{\displaystyle\int_{\Omega^n_\varepsilon}a_\varepsilon^n(x,y)|\nabla u|^{p}\,\de\La_2}\quad &\text{if}\,\,u|_{\Omega_\varepsilon^n}\in W^{1,p}(\Omega_\varepsilon^n,a_\varepsilon^n),\\
+\infty &\text{if}\,\,u\in L^2 (\Omega^*)\setminus W^{1,p}(\Omega_\varepsilon^n,a_\varepsilon^n),
\end{cases}
\end{equation}

where 
\begin{equation*}
a_\varepsilon^n(x,y)=
\begin{cases}
\delta_n^{1-p} w_\varepsilon^n(x,y)\quad &\text{if}\,\,(x,y)\in\Sigma_\varepsilon^n,\\
%\delta_n &\text{if}\,\,(x,y)\in K_n,\\
1 &\text{if}\,\,(x,y)\in{\Omega^n_\varepsilon}\setminus\overline{\Sigma_{\varepsilon}^n},
\end{cases}
\end{equation*}

and $w_\varepsilon^n(x,y)$ is the weight function defined in \eqref{definizionewn}. %The following result holds from \cite[Proposition 3.3]{CPAA}.

\begin{prop}\label{dscipref}
$\Phi_\varepsilon^{(n)}$ is a weakly lower semicontinuous, proper and strictly convex functional in $L^2(\Omega^*)$. Moreover, $\Phi_\varepsilon^\enne$ is coercive on $W^{1,p}(\Omega_\varepsilon^n,a_\varepsilon^n)$.
\end{prop}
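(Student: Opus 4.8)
The plan is to verify each of the four properties in turn, using the structure of $\Phi_\varepsilon^\enne$ as a sum of a zero-order term and a weighted $p$-Dirichlet term, together with the fact that $w_\varepsilon^n\in A_p$ for fixed $\varepsilon$ and $n$.

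\textbf{Properness.} First I would observe that $\Phi_\varepsilon^\enne$ is not identically $+\infty$: any smooth function (say the restriction of a polynomial) lies in $W^{1,p}(\Omega_\varepsilon^n,a_\varepsilon^n)$ since the weight $a_\varepsilon^n$ is bounded on $\Omega_\varepsilon^n$ away from $K_n$ and locally integrable near $K_n$ (being $A_p$), so the functional takes a finite value there; moreover it is clearly bounded below by $0$. Hence $\Phi_\varepsilon^\enne$ is proper.

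\textbf{Strict convexity.} Both $u\mapsto\frac1p\int_{\Omega_\varepsilon^n}|u|^p\,\de\La_2$ and $u\mapsto\frac1p\int_{\Omega_\varepsilon^n}a_\varepsilon^n|\nabla u|^p\,\de\La_2$ are convex, because $t\mapsto|t|^p$ is convex on $\R$ (resp. $\xi\mapsto|\xi|^p$ is convex on $\R^2$), $a_\varepsilon^n\geq 0$, and the gradient and trace operations are linear; the $+\infty$ branch causes no problem since $W^{1,p}(\Omega_\varepsilon^n,a_\varepsilon^n)$ is a linear subspace. For strict convexity it suffices to note that the zero-order term alone is strictly convex in $L^2(\Omega^*)$ restricted to $\Omega_\varepsilon^n$: if $u\neq v$ in $L^2(\Omega^*)$ with both finite energy, then either $u|_{\Omega_\varepsilon^n}\neq v|_{\Omega_\varepsilon^n}$, and strict convexity of $t\mapsto|t|^p$ for $p\geq 2$ gives the strict inequality on a set of positive measure, or $u$ and $v$ agree on $\Omega_\varepsilon^n$ but differ on $\Omega^*\setminus\overline{\Omega_\varepsilon^n}$, in which case $\Phi_\varepsilon^\enne$ depends only on the restriction to $\Omega_\varepsilon^n$ and the statement should be understood on the effective domain (equivalently, one restricts attention to $L^2(\Omega_\varepsilon^n)$ as is standard in this setting). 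I would phrase this carefully to match the paper's convention.

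\textbf{Coercivity on $W^{1,p}(\Omega_\varepsilon^n,a_\varepsilon^n)$.} This is immediate from the definitions: $\Phi_\varepsilon^\enne[u]=\frac1p\|u\|^p_{W^{1,p}(\Omega_\varepsilon^n,a_\varepsilon^n)}$ by \eqref{normapesata} and \eqref{funzionalepre}, so $\Phi_\varepsilon^\enne[u]\to+\infty$ as $\|u\|_{W^{1,p}(\Omega_\varepsilon^n,a_\varepsilon^n)}\to+\infty$.

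\textbf{Weak lower semicontinuity (the main obstacle).} Since $\Phi_\varepsilon^\enne$ is convex, weak lower semicontinuity in $L^2(\Omega^*)$ follows from strong lower semicontinuity in $L^2(\Omega^*)$ by the standard Mazur-type argument; so I would reduce to showing: if $u_k\to u$ strongly in $L^2(\Omega^*)$ then $\Phi_\varepsilon^\enne[u]\leq\liminf_k\Phi_\varepsilon^\enne[u_k]$. We may assume the $\liminf$ is finite and pass to a subsequence realizing it, so $\sup_k\|u_k\|_{W^{1,p}(\Omega_\varepsilon^n,a_\varepsilon^n)}<+\infty$. The weighted space $W^{1,p}(\Omega_\varepsilon^n,a_\varepsilon^n)$ with $a_\varepsilon^n$ an $A_p$ weight (for fixed $\varepsilon$, $n$) is a reflexive Banach space, so along a further subsequence $u_k\rightharpoonup \tilde u$ weakly in $W^{1,p}(\Omega_\varepsilon^n,a_\varepsilon^n)$; since $W^{1,p}(\Omega_\varepsilon^n,a_\varepsilon^n)\hookrightarrow L^p(\Omega_\varepsilon^n)\hookrightarrow L^2(\Omega_\varepsilon^n)$ (the domain is bounded and $p\geq 2$), the weak $W^{1,p}$-limit coincides a.e. on $\Omega_\varepsilon^n$ with the $L^2(\Omega^*)$-limit $u$, hence $u|_{\Omega_\varepsilon^n}\in W^{1,p}(\Omega_\varepsilon^n,a_\varepsilon^n)$. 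Finally, $u\mapsto\frac1p\int_{\Omega_\varepsilon^n}|u|^p+\frac1p\int_{\Omega_\varepsilon^n}a_\varepsilon^n|\nabla u|^p$ is a convex, strongly continuous functional on $W^{1,p}(\Omega_\varepsilon^n,a_\varepsilon^n)$, hence weakly lower semicontinuous there, which yields $\Phi_\varepsilon^\enne[u]\leq\liminf_k\Phi_\varepsilon^\enne[u_k]$. The delicate point, and the one I would spell out in most detail, is the reflexivity and the weak compactness of bounded sets in the Muckenhoupt-weighted Sobolev space $W^{1,p}(\Omega_\varepsilon^n,a_\varepsilon^n)$, together with the identification of the two limits; this relies precisely on the $A_p$ property of $w_\varepsilon^n$ established earlier via \eqref{condizione muck}, which guarantees that $a_\varepsilon^n$ and $(a_\varepsilon^n)^{-1/(p-1)}$ are locally integrable and that the weighted space is complete and reflexive.
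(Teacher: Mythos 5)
Your proof is correct and follows essentially the same route as the paper, whose own proof merely asserts that properness, strict convexity and coercivity are clear from the definition and that weak lower semicontinuity follows from the properties of the (weighted) norms. The extra details you supply — completeness and reflexivity of the $A_p$-weighted Sobolev space $W^{1,p}(\Omega_\varepsilon^n,a_\varepsilon^n)$ for the lower-semicontinuity step, and the remark that strict convexity must be read on the effective domain (i.e.\ in terms of $u|_{\Omega_\varepsilon^n}$, which is also how the uniqueness of the minimizer is used later) — are precisely the standard facts the paper leaves implicit.
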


\begin{proof} It is clear from the definition that $\Phi_\varepsilon^{(n)}$ is proper and strictly convex. The weak lower semicontinuity  follows from the properties of the norms and also the coercivity of $\Phi_\varepsilon^{(n)}$ follows at once.
\end{proof}

\bigskip

We now introduce the fractal energy functional on $\partial\Omega$ (see \cite{Ca02} and \cite{LVSV} for a complete discussion).\\
For $u\colon \mathcal{V}_{\star}\to\mathbb{R}$, we define for $1<p<\infty$ and $n\in\N$ the following quasi-linear discrete energy form:
\begin{equation}\label{e13}
{\mathcal E}_{p}^{(n)}[u]= \frac{4^{(p-1)n}}{p}\sum_{i_1,...,i_n=1}^4
\sum_{\xi,\eta\in V_0} {|u(\psi_{i|n}(\xi))-u(\psi_{i|n}(\eta))|^p}.
\end{equation}

We introduce the nonlinear fractal energy form $\E_p$ with domain $D(\E_p):=\{ u\in C(K): \E_p[u|_{K}]<\infty\}\subset L^p(K,\mu)$ as the following limit: 
\begin{equation}
\label{energyonF}
\E_p[u]=\lim_{n\rightarrow\infty} {\mathcal E}_{p}^{(n)}[u].
\end{equation}

\bigskip

We define the fractal functional:
\begin{equation}\label{funzionalefra}
\Phi[u]:=
\begin{cases}
\frac{1}{p}{\displaystyle\int_{\Omega}|u|^{p}\,\de\La_2}+\frac{1}{p}{\displaystyle\int_{\Omega}|\nabla u|^{p}\,\de\La_2}+\mathcal{E}_p[u]\quad &\text{if}\,\,u|_{\Omega}\in D(\Phi),\\
+\infty &\text{if}\,\,u\in L^2(\Omega^*)\setminus D(\Phi),
\end{cases}
\end{equation}
 with domain
$$D(\Phi):=\left\{u\in W^{1,p}(\Omega)\,:\,
u|_{_{K}}\in D(\mathcal{E}_p)\right\}.$$

We endow $D(\Phi)$ with the following norm:
\begin{equation*}
\|u\|^p_{D(\Phi_p)}:=\int_{\Omega}|u|^{p}\,\de\La_2+\int_{\Omega}|\nabla u|^{p}\,\de\La_2+\mathcal{E}_p[u].
\end{equation*}
From the definition of the $\|\cdot\|_{D(\Phi_p)}$-norm and by proceeding as in \cite[Proposition 2.3]{LVSV}, we can prove the following result.

\begin{prop}\label{dsci}
$\Phi$ is a weakly lower semicontinuous, proper and strictly convex functional in $L^2(\Omega^*)$. Moreover, $\Phi$ is coercive on $D(\Phi)$.
\end{prop}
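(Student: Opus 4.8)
The plan is to verify the four properties of $\Phi$ in turn, leaning on the analogous structure already established for $\Phi_\varepsilon^{(n)}$ in Proposition \ref{dscipref} and on the cited \cite[Proposition 2.3]{LVSV}. First I would observe that $\Phi$ is \emph{proper}: the function which is identically zero on $\Omega^*$ lies in $W^{1,p}(\Omega)$, its trace on $K$ is $0\in D(\mathcal{E}_p)$ (since $\mathcal{E}_p[0]=0$ by \eqref{e13}), and $\Phi[0]=0<+\infty$; also $\Phi\geq 0$ everywhere, so $\Phi$ is not identically $+\infty$ and never takes the value $-\infty$. For \emph{strict convexity}, I would note that $D(\Phi)$ is a convex subset of $L^2(\Omega^*)$ (the map $u\mapsto u|_K$ is linear, $W^{1,p}(\Omega)$ is a vector space, and $D(\mathcal{E}_p)$ is convex because each ${\mathcal E}_p^{(n)}$ in \eqref{e13} is a sum of $p$-th powers of absolute values of linear functionals of $u$, hence convex, and a pointwise limit of convex functions is convex); then $u\mapsto\int_\Omega|u|^p$ is convex and $u\mapsto\int_\Omega|\nabla u|^p$ is strictly convex on $W^{1,p}(\Omega)$ modulo constants — strict convexity of $t\mapsto|t|^p$ for $p\geq 2$ together with the first term $\int_\Omega|u|^p$ (which separates constants) yields strict convexity of the sum, and adding the convex term $\mathcal{E}_p$ preserves it.

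Next, for \emph{weak lower semicontinuity} in $L^2(\Omega^*)$, I would take a sequence $u_k\rightharpoonup u$ weakly in $L^2(\Omega^*)$ with $\liminf_k \Phi[u_k]<+\infty$; passing to a subsequence realizing the liminf, we may assume $\Phi[u_k]\leq M$, so in particular each $u_k\in D(\Phi)$ and $\|u_k\|_{D(\Phi_p)}^p\leq pM$. Since $\{u_k\}$ is bounded in $W^{1,p}(\Omega)$ and in $L^2(\Omega^*)$, up to a further subsequence $u_k\rightharpoonup v$ weakly in $W^{1,p}(\Omega)$ for some $v$, and by uniqueness of weak limits $v=u|_\Omega$, so $u|_\Omega\in W^{1,p}(\Omega)$. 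Convexity and strong closedness of the norm $\|\cdot\|_{D(\Phi_p)}$ (which amounts to lower semicontinuity of $u\mapsto\int_\Omega|u|^p+\int_\Omega|\nabla u|^p$ under weak $W^{1,p}(\Omega)$ convergence, standard, plus lower semicontinuity of $\mathcal{E}_p$) gives weak lower semicontinuity; concretely, the trace operator $\gamma_0\colon W^{1,p}(\Omega)\to B^{p,p}_\alpha(K)$ of Theorem \ref{teo traccia Jonsson Wallin} is linear and continuous, hence weakly continuous, so $u_k|_K\rightharpoonup u|_K$ in $B^{p,p}_\alpha(K)$, and since each truncated form ${\mathcal E}_p^{(n)}$ depends only on finitely many point values $u(\psi_{i|n}(\xi))$, which are recovered continuously from the $B^{p,p}_\alpha(K)$-topology (point evaluations at the finitely many vertices $\psi_{i|n}(\xi)\in\mathcal{V}_\star$ are continuous on $B^{p,p}_\alpha(K)\hookrightarrow C(K)$), we get $\liminf_k {\mathcal E}_p^{(n)}[u_k|_K]\geq {\mathcal E}_p^{(n)}[u|_K]$ for each fixed $n$; letting $n\to\infty$ shows $u|_K\in D(\mathcal{E}_p)$ with $\mathcal{E}_p[u|_K]\leq \liminf_k \mathcal{E}_p[u_k|_K]$. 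Combining with the (separately established) weak lower semicontinuity of the two bulk integrals yields $\Phi[u]\leq\liminf_k\Phi[u_k]$, i.e., $u\in D(\Phi)$ and the claim.

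Finally, \emph{coercivity on $D(\Phi)$} is immediate from the definitions: by \eqref{funzionalefra} we have, for $u\in D(\Phi)$,
\begin{equation*}
\Phi[u]=\frac{1}{p}\left(\int_\Omega|u|^p\,\de\La_2+\int_\Omega|\nabla u|^p\,\de\La_2+\mathcal{E}_p[u]\right)=\frac{1}{p}\|u\|^p_{D(\Phi_p)},
\end{equation*}
so $\Phi[u]\to+\infty$ as $\|u\|_{D(\Phi_p)}\to+\infty$.

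The main obstacle is the lower semicontinuity of the fractal term $\mathcal{E}_p$: unlike the bulk integrals, $\mathcal{E}_p$ is defined only as the increasing (in $n$, up to the normalization) limit in \eqref{energyonF}, so one must argue at the level of the truncations ${\mathcal E}_p^{(n)}$ and then exchange the limit in $n$ with the liminf in $k$. This is handled by the monotone structure — ${\mathcal E}_p^{(n)}[u]\leq \mathcal{E}_p[u]$ for every $n$ when $u\in D(\mathcal{E}_p)$, so $\mathcal{E}_p[u|_K]=\sup_n {\mathcal E}_p^{(n)}[u|_K]\leq\sup_n\liminf_k{\mathcal E}_p^{(n)}[u_k|_K]\leq\liminf_k\mathcal{E}_p[u_k|_K]$ — together with the continuity of point evaluations at vertices of $\mathcal{V}_\star$ on $B^{p,p}_\alpha(K)$, which in turn rests on the compact embedding $B^{p,p}_\alpha(K)\hookrightarrow C(K)$ (valid precisely because $\alpha=1-\tfrac{2-d_f}{p}$ exceeds the critical Hölder threshold). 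Everything else is the routine convexity-plus-weak-compactness machinery already invoked for $\Phi_\varepsilon^{(n)}$, transplanted through the trace theorem.
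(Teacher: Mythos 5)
Your overall skeleton (properness, strict convexity, weak lower semicontinuity of the two bulk terms plus the fractal term via the truncations $\mathcal{E}_p^{(n)}$ and a monotone sup/liminf exchange, coercivity from the $D(\Phi_p)$-norm) is the natural one, and the paper itself only argues by reference to \cite[Proposition 2.3]{LVSV}; but the step you yourself identify as the crux contains a genuine gap. You obtain convergence of the point values $u_k(\psi_{i|n}(\xi))$ from weak convergence of the traces in $B^{p,p}_\alpha(K)$ together with the claim that point evaluations are continuous there because $\alpha=1-\frac{2-d_f}{p}$ \lq\lq exceeds the critical H\"older threshold". On a $d_f$-set the embedding $B^{p,p}_\alpha(K)\hookrightarrow C(K)$ requires $\alpha>d_f/p$, and here $\alpha-\frac{d_f}{p}=1-\frac{2}{p}$: this is positive only for $p>2$, whereas the paper assumes $p\geq 2$. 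At $p=2$ the space $B^{2,2}_{d_f/2}(K)$ is the trace space of $H^1(\Omega)$ and contains unbounded functions, so point evaluations are not continuous (indeed not even defined) on it, and your mechanism fails exactly in the borderline case. The standard repair, consistent with the cited literature and with what the paper does in the liminf part of Theorem \ref{mconvergenza}, is to use the uniform bound $\mathcal{E}_p[u_k|_K]\leq pM$ coming from $\Phi[u_k]\leq M$: as in \cite[Proposition 3.8]{CPAA} it yields equi-H\"older continuity of the traces on $K$ with exponent $d_f/p'$, hence by Ascoli--Arzel\`a a uniformly convergent subsequence, whose limit is identified with $u|_K$ through the trace (e.g. via compactness of the trace into $L^p(K,\mu)$); convergence of the finitely many point values entering each $\mathcal{E}_p^{(n)}$ then holds for every $p\geq 2$, and the rest of your argument goes through unchanged.

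Two smaller remarks. The coercivity identity $\Phi[u]=\frac{1}{p}\|u\|^p_{D(\Phi_p)}$ is not exact, since in \eqref{funzionalefra} the term $\mathcal{E}_p[u]$ is not divided by $p$; however $\Phi[u]\geq\frac{1}{p}\|u\|^p_{D(\Phi_p)}$, so the coercivity conclusion stands. Also, $\Phi$ depends only on $u|_\Omega$, so strict convexity on all of $L^2(\Omega^*)$ must be read, as the paper implicitly does for $\Phi_\varepsilon^{(n)}$ in Proposition \ref{dscipref}, as strict convexity with respect to the restriction to $\Omega$; your convexity argument does give exactly this, so no change is needed there.
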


\bigskip

From now on, we suppose that $\varepsilon\equiv\varepsilon_n$ is a sequence converging to 0 as $n\to+\infty$. Hence, in order to lighten the notation, sometimes we suppress the subscript $\varepsilon$ and we write simply $u_n$ in place of $u_\varepsilon^n$.

We recall the definition of M-convergence adapted to our case. This definition was first introduced by Mosco in \cite{mosco1}; here we recall the definition given in \cite[Definition 2.1.1]{mosco2}, which best fits our aims. %for the case of quadratic energy forms.

\begin{definition}\label{def4.1}
Let $H:=L^2(\Omega^*)$. A sequence of proper and convex functionals $\left\{\Phi_\varepsilon^{(n)}\right\}$ defined in $H$ M-converges to
a functional $\Phi$ in $H$ if the following hold:
\begin{itemize}
	\item[\rm a)] for every $\{v_n\}\in H$ weakly converging  to $u\in H$
\begin{equation*}
\underset{n\rightarrow \infty}{\underline\lim}\Phi_\varepsilon^{(n)}
[v_n]\geq \Phi [u].
\end{equation*}
\item[\rm b)] for every $u\in H$ there exists a sequence $\{u_n\}\in H$ strongly converging to $u$ in $H$, such that
\begin{equation}\notag
\underset{n\rightarrow \infty}{\overline\lim}\Phi^{(n)}_\varepsilon[u_n]
\leq \Phi[u].
\end{equation}
\end{itemize}
\end{definition}

We prove a preliminary lemma.
\begin{lemma}\label{lemmapreliminare} For every $u\in D(\Phi)$ there exists a sequence of functions $\hat{u}_m\in C^{0,\beta}(\overline{\Omega^*})\cap W^{1,p}(\Omega^*)$ with $\beta=d_f(1-\frac{1}{p})$ such that $\hat{u}_m\equiv u|_K$ on $K$ and it converges strongly to $u$ in $L^2(\Omega^*)$. Moreover, $\hat{u}_m\xrightarrow[m\to+\infty]{}u$ strongly in $W^{1,p}(\Omega)$.
\end{lemma}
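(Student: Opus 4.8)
The plan is to combine a density argument for Besov-type traces with the Jonsson--Wallin extension operator and a mollification step, being careful that the boundary values are preserved exactly while one only needs strong $L^2$ convergence in the bulk and strong $W^{1,p}$ convergence in $\Omega$.

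First I would recall that for $u\in D(\Phi)$ we have $u|_K\in D(\E_p)\subset B^{p,p}_\alpha(K)$ (the inclusion $D(\E_p)\subset B^{p,p}_\alpha(K)$ being standard for nested fractal energies; it also follows from Theorem \ref{teo traccia Jonsson Wallin} together with $u\in W^{1,p}(\Omega)$). Since $D(\E_p)$ consists of continuous functions, $u|_K\in C(K)$. The idea is to approximate $u|_K$ on $K$ by a sequence $g_m\in C^{0,\beta}(\overline{\Omega^*})\cap W^{1,p}(\Omega^*)$ with $g_m|_K\to u|_K$ in the appropriate sense, then correct so that the boundary values agree exactly. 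Concretely, I would take $v:=\Ext_J u\in W^{1,p}(\R^2)$ from Theorem \ref{exte}, restrict attention to $\Omega^*$, and mollify: $v_m:=v*\rho_m$. Then $v_m\in C^\infty(\overline{\Omega^*})$, $v_m\to v$ strongly in $W^{1,p}(\Omega^*)$ and in $L^2(\Omega^*)$, but $v_m|_K\ne u|_K$ in general. To fix the trace, set $\hat u_m:=v_m+\Ext\!\big(u|_K-\gamma_0 v_m\big)$ restricted suitably, where $\Ext$ is the Jonsson--Wallin extension operator from $B^{p,p}_\alpha(K)$ into $W^{1,p}(\Omega)$ of Theorem \ref{teo traccia Jonsson Wallin}; by part 2) of that theorem $\gamma_0(\hat u_m)=u|_K$, and one checks $\|u|_K-\gamma_0 v_m\|_{B^{p,p}_\alpha(K)}\to0$ because $\gamma_0 v_m=\gamma_0(v*\rho_m)\to\gamma_0 v=u|_K$ (the trace operator $\gamma_0$ is continuous on $W^{1,p}$ and $v_m\to v$ there). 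Hence the correction term tends to $0$ in $W^{1,p}(\Omega)$, so $\hat u_m\to v|_\Omega=u$ in $W^{1,p}(\Omega)$ and in $L^2(\Omega)$. To obtain convergence in $L^2(\Omega^*)$ I would extend the correction term by an extension operator from $W^{1,p}(\Omega)$ to $W^{1,p}(\R^2)$ (again Jones's theorem, or simply that $\Omega$ is Lipschitz) and note it still tends to $0$ in $L^2(\Omega^*)$; adding $v_m\to v$ in $L^2(\Omega^*)$ gives $\hat u_m\to u$ in $L^2(\Omega^*)$.

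For the H\"older regularity: the mollified part $v_m$ is smooth on the bounded set $\overline{\Omega^*}$, hence automatically in $C^{0,\beta}(\overline{\Omega^*})$; the correction part $\Ext(u|_K-\gamma_0 v_m)$ lies in $W^{1,p}(\Omega)$, and since $\Omega$ is a planar $(\epsilon,\delta)$-domain, $W^{1,p}(\Omega)\hookrightarrow C^{0,1-2/p}(\overline\Omega)$ for $p>2$; near the boundary $K$ one uses instead that the Jonsson--Wallin extension of a $B^{p,p}_\alpha(K)$ function has the prescribed H\"older exponent $\beta=d_f(1-\tfrac1p)$ — indeed this is precisely the exponent $\alpha$ rescaled to the $d_f$-set, matching $\alpha=1-\tfrac{2-d_f}{p}$. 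One assembles these on $\Omega$, on $\Sigma^n_{2\varepsilon}$ and on the remaining part of $\Omega^*$; since all the relevant sets are Lipschitz (or $(\epsilon,\delta)$), a uniform H\"older bound on $\overline{\Omega^*}$ follows, with the exponent $\beta$ dictated by the worst case, which is the trace behaviour on $K$.

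The main obstacle is the simultaneous control of three different notions of convergence/regularity with a single sequence: keeping the trace on $K$ exactly equal to $u|_K$ (not merely close), while getting strong $W^{1,p}$ convergence only in $\Omega$ (where it is available) and only $L^2$ convergence in the larger tube domain $\Omega^*$ (where $u$ need not be $W^{1,p}$, or even defined, outside $\Omega$ — note $u\in D(\Phi)$ only constrains $u|_\Omega$). The delicate point is therefore the bookkeeping: one must mollify an $\R^2$-extension so that the part of $\hat u_m$ living outside $\Omega$ is harmless in $L^2(\Omega^*)$, and simultaneously invoke the trace continuity $\gamma_0(v*\rho_m)\to u|_K$ in $B^{p,p}_\alpha(K)$, which is where the explicit value $\alpha=1-\frac{2-d_f}{p}$ and hence $\beta=d_f(1-\frac1p)$ enters. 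Once these are lined up, the estimates are routine applications of Theorems \ref{exte} and \ref{teo traccia Jonsson Wallin} and standard mollifier bounds.
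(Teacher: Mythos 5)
Your construction (mollified Jones extension plus a Jonsson--Wallin correction to restore the trace) would indeed produce a sequence with the right trace and the right $L^2(\Omega^*)$ and $W^{1,p}(\Omega)$ convergences, but it does not deliver the crucial regularity assertion of the lemma, namely $\hat u_m\in C^{0,\beta}(\overline{\Omega^*})$ with $\beta=d_f(1-\frac1p)$. Your $m$-dependent correction term $\Ext\bigl(u|_K-\gamma_0 v_m\bigr)$ is only known to lie in $W^{1,p}(\Omega)$, and the H\"older exponent you can extract from that (Morrey, $p>2$) is $1-\frac2p$, which is strictly smaller than $\beta$, since $\beta-(1-\frac2p)=(d_f-1)+\frac{2-d_f}{p}>0$; for $p=2$ the correction need not even be continuous, so the statement \lq\lq$\hat u_m\equiv u|_K$ on $K$\rq\rq\ loses its pointwise meaning. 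Moreover, the claim that the Jonsson--Wallin extension of a $B^{p,p}_\alpha(K)$ datum is $C^{0,\beta}$ because \lq\lq$\alpha$ rescales to $\beta$\rq\rq\ is incorrect: $B^{p,p}_\alpha(K)$ embeds only into $C^{0,\alpha-d_f/p}(K)=C^{0,1-2/p}(K)$, not into $C^{0,\beta}(K)$.

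The exponent $\beta$ is exactly where the hypothesis $u|_K\in D(\E_p)$, and not merely $u|_K\in B^{p,p}_\alpha(K)$, must enter; this is the idea missing from your argument. The paper uses the identification $D(\E_p)=\Lip_{d_f,p,\infty}(K)\subset B^{p,\infty}_{d_f}(K)$ (Capitanelli--Lancia) and extends $u|_K$ to a single function $\hat u\in B^{p,\infty}_{d_f+\frac{2-d_f}{p}}(\R^2)\hookrightarrow C^{0,\beta}(\R^2)$; it then writes $w=\hat u-u\in W^{1,p}(\Omega^*)$ with zero trace on $K$ and approximates $w$ by functions $u_m^*\in C^1(\overline{\Omega^*})$ vanishing on $K$, setting $\hat u_m=\hat u-u_m^*$. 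In that arrangement the non-smooth building block is the fixed $C^{0,\beta}$ extension $\hat u$, and the $m$-dependent corrector is $C^1$, so every $\hat u_m$ lies in $C^{0,\beta}(\overline{\Omega^*})$ automatically, with trace $u|_K$ and $\E_p[\hat u_m]=\E_p[u|_K]$. In your arrangement the $m$-dependent part is only $W^{1,p}$ and cannot be upgraded to $C^{0,\beta}$, so the approximating sequence does not stay in the class required by the lemma (and needed afterwards for Proposition \ref{prop4.2} and the limsup construction in Theorem \ref{mconvergenza}).
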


\begin{proof}
Let $u\in D(\Phi)$, so that in particular $u|_{K}\in D(\E_p)$. From \cite[Theorem 3, page 155]{JoWa} there exists an extension operator from $W^{1,p}(\Omega)$ to $W^{1,p}(\Omega^*)$; with an abuse of notation, we still denote the extension of $u\in D(\Phi)$ to $W^{1,p}(\Omega^*)$ by $u$. From Theorem 4.1 in \cite{caplanconvex}, this latter space coincides with the space $\Lip_{d_f,p,\infty}(K)$ (for the definition see Jonsson \cite{jonssonlip}). We note that the space $\Lip_{d_f,p,\infty}(K)$ is a subspace of the Besov space $B^{p,\infty}_{d_f}(K)$; hence we can extend $u|_K$ to a function $\hat{u}$ defined on $\R^2$ and belonging to the space $B^{p,\infty}_{d_f+\frac{2-d_f}{p}}(\R^2)$. From the properties of Besov spaces we have that $B^{p,\infty}_{d_f+\frac{2-d_f}{p}}(\R^2)\subset B^{p,p}_{d_f+\frac{2-d_f}{p}-\epsilon}(\R^2)$ for every $\epsilon>0$. The latter space coincides with the usual Sobolev space $W^{d_f+\frac{2-d_f}{p}-\epsilon,p}(\R^2)$. By embedding properties of Besov spaces, we get that $\hat{u}\in C^{0,\beta}(\R^2)$ for $\beta=d_f(1-\frac{1}{p})$ \cite[page 5]{JoWa}. In particular, the trace of $\hat{u}$ on $K$ belongs to $C^{0,\beta}(K)$ and we identify $u|_K$ with this trace.\\
Since in particular $u\in W^{1,p}(\Omega^*)$, we have that $w:=\hat{u}-u$ belongs to $W^{1,p}(\Omega^*)$ and has zero trace on $K$. From the density of $C^1(\overline{\Omega^*})$ in $W^{1,p}(\Omega^*)$, there exists a sequence $\{u_m^*\}$, with $u_m^*\in C^1(\overline{\Omega^*})$ and $u^*_m\equiv 0$ on $K$, which converges strongly to $w=\hat{u}-u$ in $W^{1,p}(\Omega^*)$ for $m\to+\infty$. For every $m$ we define $\hat{u}_m:=-u^*_m+\hat{u}$. Hence, the sequence $\{\hat{u}_m\}$ converges to $u$ strongly in $L^2(\Omega^*)$ and $W^{1,p}(\Omega^*)$ and $\E_p[\hat{u}_m]=\E_p[u|_K]$. Finally, the strong convergence of $\{\hat{u}_m\}$ to $u$ in $W^{1,p}(\Omega^*)$ implies the strong convergence on $W^{1,p}(\Omega)$, thus concluding the proof.
%(RICONTROLLARE)
\end{proof}

\medskip

Before focusing on the main convergence result, we need to introduce some tools. As in \cite{mosvivthin}, we introduce a family of nested and regular triangulations $\mathcal{T}_n$ of $\Omega^*$ such that at every iteration $n$ the vertices of $K_n$ are also nodes of the triangulation. We denote by $\mathcal{P}_n$ the set of vertices of all the triangles of $\mathcal{T}_n$ and by $\mathcal{S}_n$  the space of continuous functions on $\Omega^*$ affine on every triangle of $\mathcal{T}_n$; we point out that for every $n$ it holds that $\mathcal{P}_n\subset\mathcal{P}_{n+1}$ and $\mathcal{S}_n\subset\mathcal{S}_{n+1}$.\\
By suitably adapting the proofs of Propositions 4.1 and 4.2 of \cite{mosvivthin} (see also Theorems 1 and 2 in \cite{emily}), we get the following result.
%\begin{prop}\label{prop4.1}
%There exists a linear and continuous extension operator $\Extcal$ from $D(\mathcal{E}_p)$ to $C^{0,\beta}(\overline{\Omega^*})\cap W^{1,p}(\Omega^*)$. In particular, for every $u\in D(\mathcal{E}_p)$ there exists a sequence $u_n^*\in\mathcal{S}_n$ such that
%\begin{equation}\label{proprieta estensione}
%\begin{cases}
%u_n^*(P)=u(P)\quad\forall\,P\in\mathcal{V}^n,\\
%u^*:=\Extcal\,u\in C^{0,\beta}(\overline{\Omega^*})\cap W^{1,p}(\Omega^*),\\
%u^*=\displaystyle{\lim_{n\to+\infty}u_n^*}\quad\text{and}\quad u^*|_{K}=u.
%\end{cases}
%\end{equation}
%\end{prop}
\begin{prop}\label{prop4.2}
For every $u\in C^{0,\beta}(\overline{\Omega^*})\cap W^{1,p}(\Omega^*)$ such that $u|_K\in D(\mathcal{E}_p)$ there exists a sequence of piecewise affine functions $I_nu$ interpolating $u$ in the nodes of $\mathcal{V}^n$ which converges to $u$ in $W^{1,p}(\Omega^*)$ and satisfies the estimate
\begin{equation}\label{stima interpolata}
|I_nu(P)-I_nu(Q)|\leq C|P-Q|^\beta\quad\text{for every}\quad P,Q\in\mathcal{P}_n,
\end{equation}
where the constant $C$ is independent from $n$. Moreover, $I_nu(P)=u(P)\quad\forall\,P\in\mathcal{V}^n$.
\end{prop}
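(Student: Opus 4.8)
Since $u$ is continuous on $\overline{\Omega^*}$, the plan is to take $I_nu$ to be the nodal (Lagrange) interpolant of $u$ on the triangulation $\mathcal T_n$: the unique $I_nu\in\mathcal S_n$ with $I_nu(P)=u(P)$ at every vertex $P\in\mathcal P_n$. As the vertices of $K_n$ are nodes, $\mathcal V^n\subset\mathcal P_n$, so $I_nu(P)=u(P)$ for all $P\in\mathcal V^n$ is automatic. Estimate \eqref{stima interpolata} is then immediate: for $P,Q\in\mathcal P_n$ the nodal values of $I_nu$ and $u$ coincide, hence
\[
|I_nu(P)-I_nu(Q)|=|u(P)-u(Q)|\le [u]_{C^{0,\beta}(\overline{\Omega^*})}\,|P-Q|^{\beta},
\]
i.e. \eqref{stima interpolata} holds with $C=[u]_{C^{0,\beta}(\overline{\Omega^*})}$, depending only on $u$ and hence independent of $n$.

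For the $L^p$-convergence I would use a bare oscillation estimate: on a triangle $T\in\mathcal T_n$ the affine function $I_nu$ takes values between $\min$ and $\max$ of $u$ at the vertices of $T$, so $\|I_nu-u\|_{L^\infty(T)}\le\sup_{T}u-\inf_{T}u\le[u]_{C^{0,\beta}(\overline{\Omega^*})}(\diam T)^{\beta}$. Since $\{\mathcal T_n\}$ is a regular family of nested triangulations whose mesh size $h_n$ tends to $0$, this gives $\|I_nu-u\|_{L^\infty(\Omega^*)}\to0$, hence $I_nu\to u$ strongly in $L^p(\Omega^*)$, and a fortiori in $L^2(\Omega^*)$.

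The core is the convergence $\nabla I_nu\to\nabla u$ in $L^p(\Omega^*;\R^2)$, and here I would split $\Omega^*=A_n\cup B_n$, with $B_n$ the union of the triangles of $\mathcal T_n$ that touch $K_n$ (a layer of width comparable to the local mesh scale $3^{-n}$) and $A_n$ its complement. On $A_n$ the triangulation is shape-regular and does not resolve the prefractal, so I would combine the classical finite element estimate $\|\nabla(v-I_nv)\|_{L^p(A_n)}\le C\,h_n\,\|D^2v\|_{L^p(\Omega^*)}$ for $v\in C^2(\overline{\Omega^*})$, the uniform nodal stability $\|\nabla I_nv\|_{L^p(A_n)}\le C\,\|\nabla v\|_{L^p(\Omega^*)}$ (via shape regularity and Morrey's inequality on single elements, with an extra use of the $C^{0,\beta}$ control at the endpoint $p=2$, as in \cite{mosvivthin}), and a density argument: approximate $u$ in $W^{1,p}(\Omega^*)$ by mollification, which also converges uniformly and does not increase the Hölder seminorm, and close by a $3\varepsilon$-argument, obtaining $\|\nabla(u-I_nu)\|_{L^p(A_n)}\to0$. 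Since $|B_n|\to0$, absolute continuity of the integral gives $\|\nabla u\|_{L^p(B_n)}\to0$, so it only remains to prove $\|\nabla I_nu\|_{L^p(B_n)}\to0$.

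I expect this last estimate to be the main obstacle; it is where the self-similar structure of $K_n$, the hypothesis $u|_K\in D(\E_p)$ and the adaptation of Propositions 4.1--4.2 of \cite{mosvivthin} are needed. On a boundary cell of size $3^{-n}$ the gradient of $I_nu$ is bounded, up to the shape-regularity constant, by $3^{n}$ times the oscillation of $u$ there, which is $\lesssim 3^{-n\beta}$; summing $p$-th powers over the $\sim 4^{n}$ boundary cells (each of area $\sim 3^{-2n}$) yields
\[
\|\nabla I_nu\|_{L^p(B_n)}^{p}\ \lesssim\ [u]_{C^{0,\beta}(\overline{\Omega^*})}^{p}\,\bigl(4\cdot 3^{\,p(1-\beta)-2}\bigr)^{n},
\]
and the identity $\beta=d_f(1-\tfrac1p)$ together with $3^{d_f}=4$ gives $4\cdot 3^{\,p(1-\beta)-2}=3^{(p-2)(1-d_f)}\le1$, with equality exactly at $p=2$. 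So the crude Hölder bound only yields boundedness of $\|\nabla I_nu\|_{L^p(B_n)}$ (a genuine geometric decay when $p>2$), and to upgrade it to convergence to $0$ uniformly in $n$ at the critical exponent one must replace the cell-by-cell oscillation bound by one exploiting the true $W^{1,p}(\Omega^*)$-regularity of $u$ near the boundary — e.g. a rescaled trace/Poincar\'e inequality on each cell, absorbing the surplus into $\|\nabla u\|^p_{L^p(B_n)}$ (infinitesimal) and $\mathcal E_p^{(n)}[u]$ (bounded, since $u|_K\in D(\E_p)$) — precisely as in \cite{mosvivthin,emily}. Combining the bounds on $A_n$ and $B_n$ gives $\nabla I_nu\to\nabla u$ in $L^p(\Omega^*)$, which together with the $L^p$-convergence yields $I_nu\to u$ in $W^{1,p}(\Omega^*)$ and concludes.
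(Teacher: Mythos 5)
Your construction (full nodal interpolation at every node of $\mathcal{P}_n$) settles the easy assertions correctly --- the interpolation property, the bound \eqref{stima interpolata} with $C=[u]_{C^{0,\beta}(\overline{\Omega^*})}$, and the $L^p$ (hence $L^2$) convergence --- and for $p>2$ your sketch of the gradient convergence can indeed be completed: elementwise Morrey embedding gives the $W^{1,p}$-stability of nodal interpolation in the bulk, and your own computation shows the boundary-layer term decays like $3^{n(p-2)(1-d_f)}$. The genuine gap is at the endpoint $p=2$, which the paper includes ($p\geq 2$ throughout), and it affects both halves of your argument. In the bulk, nodal interpolation is not $W^{1,2}$-stable for functions that are merely $H^1\cap C^{0,\beta}$ (the oscillation bound gives $h_n^{2\beta-2}$ over $\sim h_n^{-2}$ elements), and your three-$\varepsilon$ argument does not close because $u-v$, with $v$ a mollification, is small in $H^1$ but not in $C^{0,\beta}$; the ``extra use of the $C^{0,\beta}$ control at $p=2$'' is precisely the step that needs a proof, not a citation. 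In the boundary layer your crude bound only gives boundedness at $p=2$, as you note, and the repair you propose --- a rescaled trace/Poincar\'e inequality absorbing the surplus into $\|\nabla u\|^p_{L^p(B_n)}$ and $\mathcal{E}_p^{(n)}[u]$ --- cannot work for the plain nodal interpolant: on a triangle touching $K_n$, $I_nu$ depends on values of $u$ at mesh nodes that do \emph{not} belong to $\mathcal{V}^n$, and such point values are controlled neither by the discrete fractal energy (which only sees prefractal vertices) nor by local $L^2$-norms of $\nabla u$ (point evaluations are not $H^1$-continuous in dimension $2$). In your scheme the hypothesis $u|_K\in D(\mathcal{E}_p)$ is in fact never genuinely used.

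The missing idea, which is what the adaptation of \cite{mosvivthin,emily} supplies, is to modify the operator near $K_n$ so that on the boundary-layer triangles the interpolant is determined by the values of $u$ at the prefractal vertices alone. Then each such triangle contributes at most $C\,3^{n(p-2)}|u(P)-u(Q)|^p$ with $P,Q\in\mathcal{V}^n$ endpoints of an edge of $K_n$, and since these pairs occur in the sum defining $\mathcal{E}_p^{(n)}$,
\begin{equation*}
\sum_{T\subset B_n}\int_T|\nabla I_nu|^p\,\de\La_2\ \leq\ C\,3^{n(p-2)}\,4^{-n(p-1)}\,p\,\mathcal{E}_p^{(n)}[u]\ =\ C\,p\left(\frac{3^{p-2}}{4^{p-1}}\right)^{\!n}\mathcal{E}_p^{(n)}[u]\xrightarrow[n\to+\infty]{}0
\end{equation*}
for every $p\geq 2$, because $3^{p-2}<4^{p-1}$ and $\mathcal{E}_p^{(n)}[u]$ is bounded precisely when $u|_K\in D(\mathcal{E}_p)$. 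This is where the energy hypothesis enters, uniformly down to the critical exponent. With such a modification the interpolant no longer coincides with $u$ at the non-prefractal nodes in the layer, so the H\"older bound \eqref{stima interpolata} at those nodes and the matching with the bulk interpolant require the short additional arguments that your proposal skips; as written, the attempt reduces the proposition to the cited results rather than proving it.
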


We now prove the M-convergence result.

\begin{theorem}\label{mconvergenza}
Let $\delta_n=(3^{1-d_f})^n=\left(\frac{3}{4}\right)^n$ and $\varepsilon\equiv\varepsilon_n$ be a sequence converging to 0 as $n\to+\infty$. Let $\Phi$ and $\Phi_\varepsilon^\enne$ be defined as in \eqref{funzionalefra} and \eqref{funzionalepre} respectively. Then $\Phi_\varepsilon^{(n)}$ M-converges to the functional $\Phi$ as $n\to+\infty$.
\end{theorem}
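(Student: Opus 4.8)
The plan is to verify the two conditions of M-convergence in Definition \ref{def4.1} separately, which are traditionally called the $\liminf$ (or lower bound) inequality and the $\limsup$ (or recovery sequence) inequality.

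\textbf{Step 1: the recovery sequence (condition b)).} For $u\in L^2(\Omega^*)$, if $u\notin D(\Phi)$ there is nothing to prove, so assume $u\in D(\Phi)$. The strategy is to build $u_n$ explicitly out of two pieces: in the bulk region $\Omega_n$ (and outside the fibers) we use an extension/approximation of $u$, while inside the thin fiber $\Sigma_\varepsilon^n$ we use a controlled interpolation that realizes the discrete $p$-energy $\E_p^{(n)}$ in the limit. Concretely, by Lemma \ref{lemmapreliminare} we may first replace $u$ by a nearby $\hat u_m\in C^{0,\beta}(\overline{\Omega^*})\cap W^{1,p}(\Omega^*)$ with $\hat u_m|_K=u|_K$, and by Proposition \ref{prop4.2} we take the piecewise affine interpolants $I_n\hat u_m$, which converge to $\hat u_m$ in $W^{1,p}(\Omega^*)$ and satisfy the uniform Hölder estimate \eqref{stima interpolata}. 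We then define $u_n$ on $\Omega_\varepsilon^n$ by taking (the restriction of) $I_n\hat u_m$ on $\Omega_n$, and on each fiber piece $\Sigma_{l,\varepsilon}^{i|n}$ we interpolate linearly between the boundary values on $K_n$ (given by $I_n\hat u_m$, hence by the vertex values $u(\psi_{i|n}(\xi))$) and the value on the outer boundary $\Gamma_\varepsilon^n$, transversally constant. The key computation is the fiber energy: on a rectangle $\mathcal R_{l,\varepsilon}^{i|n}$ of length $\sim 3^{-n}$ and width $\sim\varepsilon$, the tangential gradient of the linear interpolant is $\sim 3^n|u(\psi_{i|n}(\xi))-u(\psi_{i|n}(\eta))|$, and the weight $w_\varepsilon^n$ together with the measure cancels the $\varepsilon$-dependence, while $\delta_n^{1-p}=(3^{1-d_f})^{n(1-p)}$ combines with $3^{np}\cdot 3^{-n}=3^{n(p-1)}$ to produce exactly the normalizing constant $4^{(p-1)n}$ (using $3^{(1-d_f)(p-1)n}\cdot 3^{(p-1)n}=3^{(2-d_f)(p-1)n}$ — here one checks the exponent bookkeeping against the definition of $w_\varepsilon^n$ in \eqref{definizionewn}, whose constant $\frac{2^p+C_1^p}{2}$ on the triangular pieces is designed precisely to absorb the triangle contributions). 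Passing $\varepsilon\to0$ and $n\to\infty$ along the coupled sequence, the fiber energy tends to $\E_p[u|_K]$ while the bulk part tends to $\frac1p\int_\Omega(|u|^p+|\nabla u|^p)$; a diagonal argument in $m$ then gives the recovery sequence for $u$ itself.

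\textbf{Step 2: the liminf inequality (condition a)).} Let $v_n\rightharpoonup u$ in $L^2(\Omega^*)$ and assume $\liminf_n\Phi_\varepsilon^{(n)}[v_n]<\infty$ (else nothing to prove), so along a subsequence $\Phi_\varepsilon^{(n)}[v_n]$ is bounded; in particular each $v_n|_{\Omega_\varepsilon^n}\in W^{1,p}(\Omega_\varepsilon^n,a_\varepsilon^n)$. The bulk term $\int_{\Omega_n}|\nabla v_n|^p$ is controlled (since $a_\varepsilon^n=1$ there), so by Theorem \ref{exte} the Jones extensions $\Ext_J v_n$ are bounded in $W^{1,p}(\R^2)$ uniformly in $n$; hence, up to a further subsequence, $\Ext_J v_n\rightharpoonup u$ in $W^{1,p}(\Omega^*)$ and a.e., which shows $u\in W^{1,p}(\Omega)$ and, by weak lower semicontinuity of the convex bulk functional, yields $\liminf_n\frac1p\int_{\Omega_n}(|v_n|^p+|\nabla v_n|^p)\ge\frac1p\int_\Omega(|u|^p+|\nabla u|^p)$. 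For the fiber contribution, introduce the average-value operators along the transversal segments of $\Sigma_\varepsilon^n$: integrating the weighted gradient against $w_\varepsilon^n$ and using Jensen/Hölder on each rectangle, the tangential part of $\int_{\Sigma_\varepsilon^n}a_\varepsilon^n|\nabla v_n|^p$ bounds from below $\frac{4^{(p-1)n}}{p}\sum_{i|n}\sum_{\xi,\eta}|v_n(\psi_{i|n}(\xi))-v_n(\psi_{i|n}(\eta))|^p+o(1)$, i.e.\ $\ge\E_p^{(n)}[v_n|_{\mathcal V^n}]+o(1)$ (modulo the triangular corrections absorbed by the $w_\varepsilon^n$ normalization). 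One must also verify that the trace of $u$ on $K$ is the relevant limit of the vertex values $v_n(\psi_{i|n}(\xi))$: this follows from the $W^{1,p}$-bound via a trace/interpolation argument showing that on $K_n$, $v_n|_{K_n}$ (equivalently its interpolant) converges to $u|_K$ in $B_\alpha^{p,p}$-norm, using Theorem \ref{teo traccia Jonsson Wallin}. Finally, by definition of $\E_p$ as the increasing limit of $\E_p^{(n)}$ together with lower semicontinuity of each $\E_p^{(n)}$ under pointwise convergence of vertex values, we conclude $\liminf_n\E_p^{(n)}[v_n]\ge\E_p[u|_K]$ and in particular $u|_K\in D(\E_p)$, so $u\in D(\Phi)$ and condition a) holds.

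\textbf{Main obstacle.} The delicate point is the fiber analysis in the $\liminf$ step: one has to pass from the $2$-dimensional weighted Dirichlet integral $\int_{\Sigma_\varepsilon^n}a_\varepsilon^n|\nabla v_n|^p$ of an \emph{arbitrary} admissible $v_n$ (not the nice interpolant used in Step 1) to the $1$-dimensional discrete $p$-energy on the vertices, uniformly as both $\varepsilon\to0$ and $n\to\infty$. This requires careful use of the $A_p$-property of $w_\varepsilon^n$ (with constant uniform in $n,\varepsilon$, as recalled after \eqref{condizione muck}) to justify trace inequalities and Poincaré-type estimates on the thin rectangles and triangles with the singular weight, and a precise matching of the geometric scaling factors $3^{-n}$, $\varepsilon$, $\delta_n^{1-p}$ and $4^{(p-1)n}$ so that no spurious powers of $\varepsilon$ or $4^n$ survive. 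The coupling $\varepsilon=\varepsilon_n\to0$ is what makes the transversal (non-tangential) part of the fiber gradient harmless, but one must check the error terms it generates vanish; this is the heart of the singular $p$-homogenization.
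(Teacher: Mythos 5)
Your overall architecture coincides with the paper's: for the recovery sequence you use Lemma \ref{lemmapreliminare}, the interpolants $I_n$ of Proposition \ref{prop4.2}, a transversally constant extension into $\Sigma_\varepsilon^n$ matched linearly across $\Sigma_{2\varepsilon}^n\setminus\Sigma_\varepsilon^n$, and a diagonal argument in $m$; for the liminf you use the Jones extension of Theorem \ref{exte} for the bulk and a transversal averaging operator on the fibers. This is exactly the paper's scheme. A small slip: your exponent bookkeeping $3^{(1-d_f)(p-1)n}\cdot 3^{(p-1)n}=3^{(2-d_f)(p-1)n}$ gives $(9/4)^{(p-1)n}$, not $4^{(p-1)n}$; the correct computation is $\delta_n^{1-p}\,3^{(p-1)n}=3^{(d_f-1)(p-1)n}\,3^{(p-1)n}=3^{d_f(p-1)n}=4^{(p-1)n}$.

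The genuine gap is precisely where you place your ``main obstacle'', and it is not resolved in your sketch. First, an arbitrary admissible $v_n\in W^{1,p}(\Omega_\varepsilon^n,a_\varepsilon^n)$ has no well-defined vertex values $v_n(\psi_{i|n}(\xi))$, so the quantity $\E_p^\enne[v_n|_{\mathcal{V}^n}]$ you want to extract from the weighted fiber integral is not even defined; the paper handles this by first reducing, via a density argument \eqref{densita}, to $v_n\in C^1(\overline{\Omega_\varepsilon^n})$, for which the averaging operator $\mathcal{M}_\varepsilon$ and the mean value theorem computations leading to \eqref{primastimaliminf} make sense (and $\tilde v_n$ coincides with $v_n$ on $\mathcal{V}^n$). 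Second, your passage from $\liminf_n\E_p^\enne[v_n|_{\mathcal{V}^n}]$ to $\E_p[u|_K]$ rests on an unproved claim that the traces $v_n|_{K_n}$ converge to $u|_K$ in $B^{p,p}_\alpha$-norm; these traces live on varying pre-fractal curves and weak $W^{1,p}$ convergence alone does not give pointwise (or Besov) convergence of vertex values. The paper instead proves equi-H\"older continuity of the averaged functions $\tilde v_n$ on $K_n$, extends them to $K$ by the harmonic extension obtained by decimation, uses the identity $\E_p[H\tilde v_n]=\E_p^{(n)}[\tilde v_n|_{\mathcal{V}^n}]$, and then the lower semicontinuity argument of \cite[Theorem 3.5]{CPAA} to get \eqref{aux2}--\eqref{aux3}. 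Your alternative route (monotonicity of the renormalized discrete energies plus lower semicontinuity under pointwise convergence of vertex values) could in principle be made rigorous, but the monotonicity is itself a nontrivial decimation property from \cite{Ca02} and the pointwise convergence still needs the compactness supplied by the equi-H\"older bound; as written, this step is asserted rather than proved.
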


\begin{proof}
{\bf 1) Limsup condition}: We can suppose that $u\in D(\Phi)$ since if $u\notin D(\Phi)$ then $\Phi[u] = +\infty $ and the thesis would trivially hold.\\
%Let then $u\in D(\Phi)$, i.e. $u\in W^{1,p}(\Omega^*)$ and $u|_{K}\in D(\E_p)$. We suppose $p>2$. We point out that since $u\in W^{1,p}(\Omega^*)$ and $p>2$, in particular $u$ belongs to $C({\overline\Omega})$. Moreover, since $u\in D(\E_p)$ by hypothesis, from Corollary 4.5 in \cite{caplanconvex}, we have that $u\in C^{0,\beta}(K)$ with $\beta=d_f(1-\frac{1}{p})$.\\
We first assume that $u\in C^{0,\beta}(\overline{\Omega^*})$, where we recall that $\beta=d_f(1-\frac{1}{p})$. We have to construct a sequence $u_n$ strongly converging in $L^2(\Omega^*)$ to $u$. %We consider the space $\mathcal{S}_n$ of the continuous functions on $K_n$ affine on every side of $K_n$. We recall that $\mathcal{V}^n$ is the set of the vertices of $K_n$. For a given continuous function $u$ on $\Omega^*$, we denote by $I_n u$ the continuous function obtained by interpolating the values of $u$ on $\mathcal{V}^n$, i.e.
%\begin{center}
%$I_n u\in\mathcal{S}_n$ and $I_n u(P)=u(P)$ in every vertex $P$ of $K_n$.
%\end{center}

We define the following operator
\begin{center}
$G_\varepsilon\colon C^{0,\beta}(\overline{\Omega^*})\cap W^{1,p}(\Omega^*)\to C^{0,\beta}(\overline{\Omega^*})\cap W^{1,p}(\Omega^*)$
\end{center}
which acts on $\Omega^n_\varepsilon$. Let $(x_l,y_l)$ denote the orthogonal projection of $(x,y)\in\Sigma_{l,2\varepsilon}$ on $S_l$, for $l=1,2,3$. Then, for every point $P=(x,y)$ of $\Sigma_{l,2\varepsilon}\setminus\Sigma_{l,\varepsilon}$, we set $\hat{P}_l=(\hat{x}_l,\hat{y}_l)\in\partial\Sigma_{l,\varepsilon}$ and $\tilde{Q}_l=(\tilde{x}_l,\tilde{y}_l)\in\partial\Sigma_{l,2\varepsilon}$ to be the intersections between the straight line orthogonal to $S_l$ at $(x_l,y_l)$ and $\partial\Sigma_{l,\varepsilon}\setminus S_l$ and $\partial\Sigma_{l,2\varepsilon}\setminus S_l$ respectively (see Figure \ref{zoom}).

\begin{figure}
\centering
\includegraphics[scale=0.8]{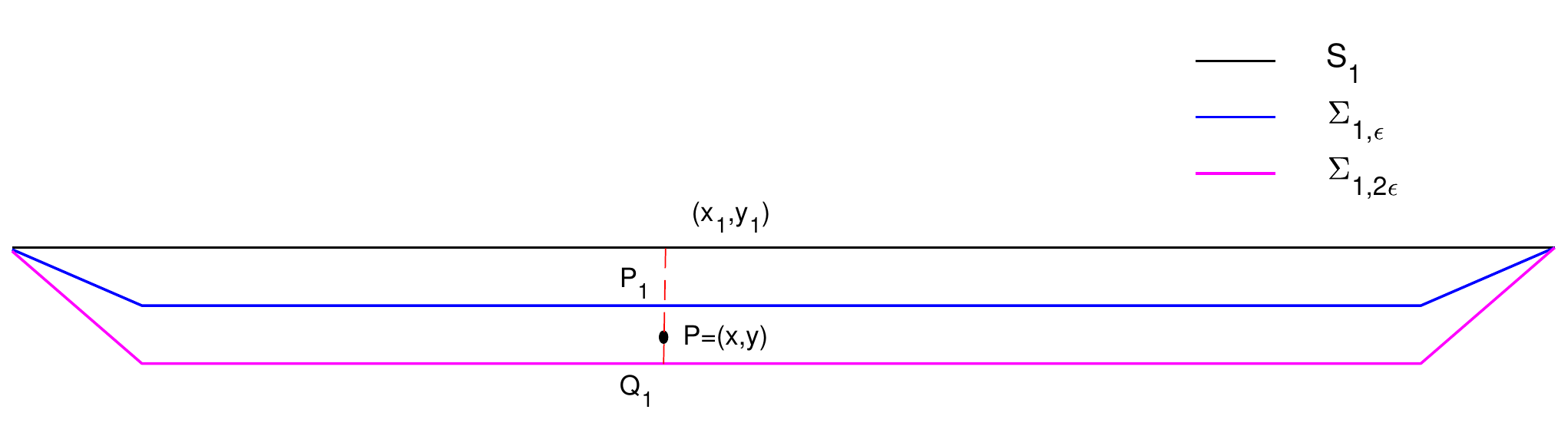}%
\caption{A possible example of $\hat{P}_l$ and $\tilde{Q}_l$.}%
\label{zoom}%
\end{figure}

Hence, for $g\in C^{0,\beta}(\overline{\Omega^*})$, we define %(CI VA L'INTERPOLATA???)
\begin{equation}\label{definizioneg}
G_\varepsilon(g(x,y))=
\begin{cases}
g(x,y) &\text{if}\,\,(x,y)\in\Omega^*\setminus\Sigma_{l,2\varepsilon},\\[2mm]
I_0g(x_l,y_l) &\text{if}\,\,(x,y)\in\overline{\Sigma_{l,\varepsilon}},\\[2mm]
I_0g(x_l,y_l)t_l+g(\tilde{Q}_l)(1-t_l)\quad &\text{if}\,\,(x,y)\in\overline{\Sigma_{l,2\varepsilon}}\setminus\overline{\Sigma_{l,\varepsilon}},
\end{cases}
\end{equation}
where
\begin{equation*}
t_l=\frac{|\tilde{y}_l-y|+|\tilde{x}_l-x|}{|\tilde{y}_l-\hat{y}_l|+|\tilde{x}_l-\hat{x}_l|}.
\end{equation*}
%With this definition $G_\varepsilon$ is a piecewise-affine function.\\
We now set 
\begin{equation}\label{definizioneun}
u_n=
\begin{cases}
I_nu &\text{if}\,\,(x,y)\in\Omega^*\setminus\Sigma_{2\varepsilon}^n,\\[2mm]
G_\varepsilon(I_n u\circ\psi_{i|n})\circ\psi_{i|n}^{-1}\quad &\text{if}\,\,(x,y)\in\Sigma_{2\varepsilon}^n,
\end{cases}
\end{equation}
where $I_nu$ is the interpolating function given by Proposition \ref{prop4.2}.

We point out that, since $u$ is H\"older continuous on $\Omega^*$, from the definition of $u_n$ it follows that
\begin{equation}\label{limitatezza}
\max_{\Sigma_{2\varepsilon}^n}|u_n|\leq\|u\|_{L^\infty(\Omega^*)}.
\end{equation}
Hence
\begin{equation*}
\begin{split}
&\|u_n-u\|^2_{L^2(\Omega^*)}=\|I_nu-u\|^2_{L^2(\Omega^*\setminus\Sigma_{2\varepsilon}^n)}+\int_{\Sigma_{2\varepsilon}^n}|u_n-u|^2\,\de\La_2\\
&\leq\|I_nu-u\|^2_{L^2(\Omega^*)}+C\left(\max_{\Sigma_{2\varepsilon}^n}|u_n|^2+\max_{\Sigma_{2\varepsilon}^n}|u|^2\right)|\Sigma_{2\varepsilon}^n|;
\end{split}
\end{equation*}
since $|\Sigma_{2\varepsilon}^n|\to 0$ as $n\to+\infty$, from \eqref{limitatezza} and Proposition \ref{prop4.2} we get that $u_n$ converges to $u$ strongly in $L^2(\Omega^*)$.

%We now choose the weight $w_\varepsilon^n$ accordingly. For any measurable set $A$, we set $A^{i|n}:=\psi_{i|n}(A)$. Let $P$ be a point belonging to $\partial(\Sigma_{l,\varepsilon}^{i|n}\setminus K_n)$ and let $P^\bot$ be its orthogonal projection on $S_l^{i|n}$. Let $(x,y)$ belong to the segment of endpoints $P$ and $P^\bot$. Then% ({\bf PROPOSTA MODIFICA!})
%\begin{equation}\label{definizionewn1}
%w^n_\varepsilon(x,y)=
%\begin{cases}
%\frac{2^p+C_1^p}{2|P-P^\bot|}\quad &\text{if}\,\,(x,y)\in\mathcal{T}^{i|n}_{j,l,\varepsilon},\,j=1,2,\\[2mm]
%\frac{1}{|P-P^\bot|} &\text{if}\,\,(x,y)\in\mathcal{R}^{i|n}_{l,\varepsilon}.
%\end{cases}
%\end{equation}

According to the definition of $u_n$ and $a_\varepsilon^n$, we split the functional $\Phi_\varepsilon^\enne$ as follows:
\begin{equation*}
\begin{split}
\Phi_\varepsilon^\enne[u_n]&=\frac{1}{p}\int_{\Omega_n}|I_n u|^{p}\,\de\La_2+\frac{1}{p}\int_{\Sigma^n_{2\varepsilon}\setminus\Sigma_{\varepsilon}^n}|u_n|^{p}\,\de\La_2+\frac{1}{p}\int_{\Sigma^n_\varepsilon}|u_n|^{p}\,\de\La_2\\[2mm]
&+\frac{1}{p}\int_{\Omega_n}|\nabla I_n u|^{p}\,\de\La_2+\frac{1}{p}\int_{\Sigma^n_{2\varepsilon}\setminus\Sigma_{\varepsilon}^n}|\nabla u_n|^{p}\,\de\La_2+\frac{\delta_n^{1-p}}{p}\int_{\Sigma^n_\varepsilon}w_\varepsilon^n|\nabla u_n|^{p}\,\de\La_2.
\end{split}
\end{equation*}

Since the domain $\Omega_n$ tends to the fractal domain $\Omega$ as $n\to+\infty$, from the properties of the interpolating functions we get that
\begin{equation*}
\lim_{n\to+\infty}\frac{1}{p}\int_{\Omega_n}|I_n u|^{p}\,\de\La_2=\frac{1}{p}\int_{\Omega}|u|^{p}\,\de\La_2\quad\text{and}\quad\lim_{n\to+\infty}\frac{1}{p}\int_{\Omega_n}|\nabla I_nu|^{p}\,\de\La_2=\frac{1}{p}\int_{\Omega}|\nabla u|^{p}\,\de\La_2
\end{equation*}
We should prove that
\begin{equation}\label{condizione3}
\lim_{n\to+\infty}\int_{\Sigma^n_{2\varepsilon}\setminus\Sigma_{\varepsilon}^n}|u_n|^{p}\,\de\La_2=\lim_{n\to+\infty}\int_{\Sigma^n_\varepsilon}|u_n|^{p}\,\de\La_2=0,
\end{equation}
\begin{equation}\label{condizione1}
\lim_{n\to+\infty}\int_{\Sigma^n_{2\varepsilon}\setminus\Sigma_{\varepsilon}^n}|\nabla u_n|^{p}\,\de\La_2=0
\end{equation}
and
\begin{equation}\label{condizione2}
{\underset{n\to+\infty}{\overline\lim}}\frac{\delta_n^{1-p}}{p}\int_{\Sigma^n_\varepsilon}w_\varepsilon^n|\nabla u_n|^{p}\,\de\La_2\leq\E_p[u].
\end{equation}

We begin by proving \eqref{condizione3} and \eqref{condizione1}. We point out that by definition
\begin{equation}\notag
\Sigma^n_\varepsilon=\bigcup_{i|n}\Sigma^{i|n}_\varepsilon\quad\text{and}\quad\Sigma^n_{2\varepsilon}=\bigcup_{i|n}\Sigma^{i|n}_{2\varepsilon}.
\end{equation}
Moreover, we can decompose $\Sigma^n_{2\varepsilon}\setminus\Sigma_{\varepsilon}^n$ in the sum of three rectangles and six triangles; hence
\begin{equation}\label{decomposizione1}
\int_{\Sigma^n_{2\varepsilon}\setminus\Sigma_{\varepsilon}^n}=\bigcup_{i|n}\int_{\Sigma^{i|n}_{2\varepsilon}\setminus\Sigma_{\varepsilon}^{i|n}}=\bigcup_{i|n}\left(\sum_{l=1}^3\int_{\psi_{i|n}(R_l)}+\sum_{l=1}^3\sum_{j=1}^2\int_{\psi_{i|n}(T_{l,j})}\right).
\end{equation}
We start with the integrals on the rectangles. Since the computations are similar, we explicitly compute only the integral on the rectangle $R_1$, which has as vertices $P_1$, $P_2$, $Q_1$ and $Q_2$. We enumerate the other rectangles starting from $R_1$ going counterclockwise.\\
We set $g(x,y):=(I_n u\circ\psi_{i|n})(x,y)$. On $R_1$, from Proposition \ref{prop4.2}, the function $G_\varepsilon(g(x,y))$ is defined as follows:%
\begin{center}
$\displaystyle G_\varepsilon(g(x,y))=-\frac{2y}{\sqrt{3}}\left(u(\psi_{i|n}(A))+u(\psi_{i|n}(B))-2u(\psi_{i|n}(D))\right)+2\left(u(\psi_{i|n}(A))(1-x)+u(\psi_{i|n}(B))x\right)-\left(u(\psi_{i|n}(A))\left(1-x+\frac{\varepsilon}{\sqrt{3}}\right)+u(\psi_{i|n}(B))\left(x+\frac{\varepsilon}{\sqrt{3}}\right)-u(\psi_{i|n}(D))\frac{2\varepsilon}{\sqrt{3}}\right)$.
\end{center}

We recall that $u\in C^{0,\beta}(\overline{\Omega^*})$ by hypothesis. By a change of variables and integrating, we get
\begin{equation}\label{stimarett per u}
\!\!\!\!\!\!\!\!\!\!\!\!\!\!\!\int_{\psi_{i|n}(R_1)}|G_\varepsilon(g)|^p\,\de\La_2\leq \Theta^1_p \left\{3^{-2n}3^{-n\beta p}\left(1-\frac{2\varepsilon}{C_1}\right)\varepsilon^{p+1}+3^{-2n}3^{-n\beta p}\frac{\varepsilon}{2}\left[\left(1-\frac{\varepsilon}{C_1}\right)^{p+1}-\frac{\varepsilon^{p+1}}{C_1^{p+1}}\right]+3^{-2n}|R_1|\right\},
\end{equation}
where $\Theta^1_p$ is a suitable positive constant which depends in particular on $p$ and on the H\"older constant of $u$.

We now compute $(G_\varepsilon(g))_x$ and $(G_\varepsilon(g))_y$; since $u\in C^{0,\beta}(\overline{\Omega^*})$, we have that both derivatives are bounded by constants not depending on $\varepsilon$. Hence we get
\begin{equation}\label{stimarettangolo}
\int_{\psi_{i|n}(R_1)}|\nabla G_\varepsilon(g)|^p\,\de\La_2\leq 3^{n(p-2)}C_H^p C^*_p 3^{-n\beta p} |R_1|=3^{n(p-2)}C_H^p C^*_p 3^{-n\beta p}\left(1-\frac{2\varepsilon}{C_1}\right)\frac{\varepsilon}{2},
\end{equation}
where $C^*_p$ is a constant depending on $p$ and $C_H$ is the H\"older constant of $u$.\\
We pass now to the integrals on the triangles. As above, we explicitly compute the integral on the triangle $T_{1,1}$ of vertices $A$, $P_1$ and $Q_1$; the other triangles can be computed similarly.\\
On $T_1$, the function $G_\varepsilon(g(x,y))$ is defined as follows:
\begin{center}
$\displaystyle G_\varepsilon(g(x,y))=-\frac{2y}{\sqrt{3}}\left(u(\psi_{i|n}(A))+u(\psi_{i|n}(B))-2u(\psi_{i|n}(D))\right)+2\left(u(\psi_{i|n}(A))(1-x)+u(\psi_{i|n}(B))x\right)-\left(u(\psi_{i|n}(A))\left(1-x+\frac{C_1x}{\sqrt{3}}\right)+u(\psi_{i|n}(B))\left(x+\frac{C_1x}{\sqrt{3}}\right)-u(\psi_{i|n}(D))\frac{2C_1x}{\sqrt{3}}\right)$.
\end{center}

By changing the variables and integrating, we get
\begin{equation}\label{stimatriang per u}
\int_{\psi_{i|n}(T_{1,1})}|G_\varepsilon(g)|^p\,\de\La_2\leq \Theta^2_p \left(3^{-2n}3^{-n\beta p}\varepsilon^{p+2}+3^{-2n}\varepsilon^2\right),
\end{equation}
where $\Theta^2_p$ is a suitable positive constant which depends in particular on $p$ and on the H\"older constant of $u$.

As before, $(G_\varepsilon(g))_x$ and $(G_\varepsilon(g))_y$ are bounded by constant not depending on $\varepsilon$. Hence we get
\begin{equation}\label{stimatriangolo}
\int_{\psi_{i|n}(T_{1,1})}|\nabla G_\varepsilon(g)|^p\,\de\La_2\leq 3^{n(p-2)}C_H^p \hat{C}_p 3^{-n\beta p} \int_0^\frac{\varepsilon}{C_1}\frac{C_1x}{2}\,\de x=3^{n(p-2)}C_H^p \hat C_p 3^{-n\beta p}\frac{\varepsilon^2}{4C_1}.
\end{equation}%=\int_0^\frac{\varepsilon}{C_1}\int_{-C_1x}^{-\frac{C_1x}{2}}|\nabla g_\varepsilon|^p\,\de y\,\de x

From \eqref{stimarett per u}, \eqref{stimatriang per u} and \eqref{decomposizione1} we get that there exists a suitable positive constant $\Theta_p$ such that
\begin{equation*}
\int_{\Sigma^n_{2\varepsilon}\setminus\Sigma_{\varepsilon}^n}|u_n|^p\,\de\La_2=\sum_{i|n}\int_{\Sigma^{i|n}_{2\varepsilon}\setminus\Sigma_{\varepsilon}^{i|n}}|u_n|^p\,\de\La_2\leq\sum_{i|n} \Theta_p 3^{-2n}(3^{-n\beta p}+1)\varepsilon=\Theta_p \varepsilon\left(\frac{3^{-2n}}{4^{n(p-2)}}+\frac{4^n}{3^{2n}}\right),
\end{equation*}
thus proving the first part of \eqref{condizione3}. The second assertion of \eqref{condizione3}, namely that
$$\lim_{n\to+\infty}\int_{\Sigma^n_\varepsilon}|u_n|^{p}\,\de\La_2=0,$$
 can be proved in a similar way.

From \eqref{stimarettangolo}, \eqref{stimatriangolo} and \eqref{decomposizione1} we get
\begin{equation*}
\int_{\Sigma^n_{2\varepsilon}\setminus\Sigma_{\varepsilon}^n}|\nabla u_n|^p\,\de\La_2=\sum_{i|n}\int_{\Sigma^{i|n}_{2\varepsilon}\setminus\Sigma_{\varepsilon}^{i|n}}|\nabla u_n|^p\,\de\La_2\leq\sum_{i|n} 3^{n(p-2)}C_H^p \tilde{C}_p\varepsilon 3^{-n\beta p}=C_H^p \tilde{C}_p\varepsilon\frac{3^{n(p-2)}}{4^{n(p-2)}},
\end{equation*}
thus proving \eqref{condizione1}.\\

We have now to prove \eqref{condizione2}. By proceeding as above, we split the integrals on $\Sigma_\varepsilon^n$ in the sum of integrals on triangles and rectangles:
\begin{equation*}
\frac{\delta_n^{1-p}}{p}\int_{\Sigma^n_\varepsilon}w_\varepsilon^n|\nabla u_n|^{p}\,\de\La_2=\frac{\delta_n^{1-p}}{p}\bigcup_{i|n}\int_{\Sigma_{\varepsilon}^{i|n}}w_\varepsilon^n|\nabla u_n|^{p}\,\de\La_2=\frac{\delta_n^{1-p}}{p}\bigcup_{i|n}\left(\sum_{l=1}^3\int_{\psi_{i|n}(R_l)}+\sum_{l=1}^3\sum_{j=1}^2\int_{\psi_{i|n}(T_{l,j})}\right).
\end{equation*}

As before, we perform the calculations on $\Sigma_{1,\varepsilon}$. With an abuse of notation, let $R_1$ be the rectangle of vertices $P_1$, $P_2$, $P_3=(\frac{\varepsilon}{C_1},0)$ and $P_4=(1-\frac{\varepsilon}{C_1},0)$. We point out that on $\psi_{i|n}(\Sigma_{1,\varepsilon})$, $w^n_\varepsilon=3^n\,\ell_\varepsilon^{-1}(x)$ with
\begin{equation*}
\ell_\varepsilon(x)=
\begin{cases}
\displaystyle\frac{C_1x}{2^p+C_1^p} &0<x<\frac{\varepsilon}{C_1},\\[3mm]
\displaystyle\frac{\varepsilon}{2} &\frac{\varepsilon}{C_1}<x<1-\frac{\varepsilon}{C_1},\\[3mm]
\displaystyle\frac{C_1-C_1x}{2^p+C_1^p}\quad &1-\frac{\varepsilon}{C_1}<x<1.
\end{cases}
\end{equation*}

If $G_\varepsilon(g(x,y))$ is defined as in the above case of the rectangle, recalling the definition of $\delta_n$ we have that
\begin{center}
$\displaystyle\frac{\delta_n^{1-p}}{p}\int_{\psi_{i|n}(R_1)}w_\varepsilon^n|\nabla u_n|^{p}\,\de\La_2=\frac{\delta_n^{1-p}}{p}\frac{2\cdot 3^n}{\varepsilon} 3^{n(p-2)}\int_\frac{\varepsilon}{C_1}^{1-\frac{\varepsilon}{C_1}}\int^0_{-\frac{\varepsilon}{2}}\left|u(\psi_{i|n}(A))-u(\psi_{i|n}(B))\right|^p\,\de y\,\de x
=\frac{4^{n(p-1)}}{p}\frac{2}{\varepsilon}\left(1-\frac{2\varepsilon}{C_1}\right)\frac{\varepsilon}{2}\left|u(\psi_{i|n}(A))-u(\psi_{i|n}(B))\right|^p$.
\end{center}

Again with an abuse of notation, now let $T_{1,1}$ be the triangle of vertices $A$, $P_1$ and $P_3$. Hence we can compute the integral on $T_{1,1}$ as follows:
\begin{center}
$\displaystyle\frac{\delta_n^{1-p}}{p}\int_{\psi_{i|n}(T_{1,1})}w_\varepsilon^n|\nabla u_n|^{p}\,\de\La_2=\frac{\delta_n^{1-p}}{p}\frac{3^n(2^p+C_1^p)}{C_1} 3^{n(p-2)}\int_0^{\frac{\varepsilon}{C_1}}\int^0_{-\frac{C_1x}{2}}\frac{\left|u(\psi_{i|n}(A))-u(\psi_{i|n}(B))\right|^p}{x}\,\de y\,\de x=\frac{4^{n(p-1)}}{p}\frac{2^p+C_1^p}{2}\frac{\varepsilon}{C_1}\left|u(\psi_{i|n}(A))-u(\psi_{i|n}(B))\right|^p$.
\end{center}
Proceeding counterclockwise from $T_{1,1}$, we denote by $T_{1,2}$ the triangle of vertices $B$, $P_2$ and $P_4$. Then we have that
\begin{center}
$\displaystyle\frac{\delta_n^{1-p}}{p}\int_{\psi_{i|n}(T_{1,2})}w_\varepsilon^n|\nabla u_n|^{p}\,\de\La_2=\frac{\delta_n^{1-p}}{p}\frac{3^n(2^p+C_1^p)}{2} 3^{n(p-2)}\int^1_{1-\frac{\varepsilon}{C_1}}\int^0_{\frac{C_1(x-1)}{2}}\frac{\left|u(\psi_{i|n}(A))-u(\psi_{i|n}(B))\right|^p}{C_1-C_1x}\,\de y\,\de x=\frac{4^{n(p-1)}}{p}\frac{2^p+C_1^p}{2}\frac{\varepsilon}{C_1}\left|u(\psi_{i|n}(A))-u(\psi_{i|n}(B))\right|^p$.
\end{center}
Summing the above integrals, we get
\begin{center}
$\displaystyle\frac{\delta_n^{1-p}}{p}\int_{\psi_{i|n}(\Sigma_{1,\varepsilon})}w_\varepsilon^n|\nabla u_n|^{p}\,\de\La_2=\frac{4^{n(p-1)}}{p}\left(1-\frac{2\varepsilon}{C_1}+(2^p+C_1^p)\frac{\varepsilon}{C_1}\right)\left|u(\psi_{i|n}(A))-u(\psi_{i|n}(B))\right|^p$.
\end{center}
By applying the above reasoning to the other integrals, setting $C_p:=2^p+C_1^p-2$ we have
\begin{align*}
&\frac{\delta_n^{1-p}}{p}\int_{\Sigma_{\varepsilon}^n}w_\varepsilon^n|\nabla u_n|^{p}\,\de\La_2=\left(1+\frac{\varepsilon}{C_1}C_p\right)\frac{4^{n(p-1)}}{p}\sum_{i|n}\left(\left|u(\psi_{i|n}(A))-u(\psi_{i|n}(B))\right|^p\right.\\[2mm]
&\left.+\left|u(\psi_{i|n}(B))-u(\psi_{i|n}(C))\right|^p+\left|u(\psi_{i|n}(A))-u(\psi_{i|n}(C))\right|^p\right).
\end{align*}
Taking into account the definition of $\E_p$, we get that
\begin{equation*}
\frac{\delta_n^{1-p}}{p}\int_{\Sigma_{\varepsilon}^n}w_\varepsilon^n|\nabla u_n|^{p}\,\de\La_2\leq\left(1+\frac{\varepsilon}{C_1}C_p\right) \E_p[u],
\end{equation*}
and by taking the limit for $n\to+\infty$ we get \eqref{condizione2}, since $\varepsilon\equiv\varepsilon_n\to 0$.

We now remove the assumption $u\in C^{0,\beta}(\overline{\Omega^*})$. From Lemma \ref{lemmapreliminare}, for every $u\in D(\Phi)$ there exists a sequence of functions $\hat{u}_m\in C^{0,\beta}(\overline{\Omega^*})$ which converges strongly to $u$ both in $L^2(\Omega^*)$ and in $W^{1,p}(\Omega)$ for $m\to+\infty$ and such that $\hat{u}_m=u$ on $K$. From the first part of the proof, for every function $\hat{u}_m\in C^{0,\beta}(\overline{\Omega^*})$ there exists a sequence of functions $\hat{u}_{m,n}\in L^2(\Omega^*)$ such that 
\begin{equation}\label{approssimante}
\lim_{n\to+\infty}\|\hat{u}_{m,n}-\hat{u}_{m}\|_{L^2(\Omega^*)}=0\quad\text{and}\quad\underset{n\rightarrow \infty}{\overline\lim}\Phi_\varepsilon^\enne[\hat{u}_{m,n}]\leq\Phi[\hat{u}_m].
\end{equation}
From the properties of $\hat{u}_m$ and \eqref{approssimante} we get
\begin{equation*}
\begin{split}
\Phi[u]&=\frac{1}{p}\int_{\Omega}|u|^{p}\,\de\La_2+\frac{1}{p}\int_{\Omega}|\nabla u|^{p}\,\de\La_2+\mathcal{E}_p[u]=\lim_{m\to+\infty}\left(\frac{1}{p}\int_{\Omega}|\hat u_m|^{p}\,\de\La_2+\frac{1}{p}{\displaystyle\int_{\Omega}|\nabla\hat{u}_m|^{p}\,\de\La_2}+\mathcal{E}_p[\hat{u}_m]\right)\\[3mm]
&=\lim_{m\to+\infty}\Phi[\hat{u}_m]\geq\lim_{m\to+\infty}\left(\underset{n\rightarrow \infty}{\overline\lim}\Phi_\varepsilon^\enne[\hat{u}_{m,n}]\right).
\end{split}
\end{equation*}
In addition to that, by direct calculations we have that
\begin{equation*}
\lim_{m\to+\infty}\left(\lim_{n\to+\infty}\|\hat{u}_{m,n}-u\|_{L^2(\Omega^*)}\right)=0.
\end{equation*}
We now use the diagonal formula given in Corollary 1.16 in \cite{Att}. There exists a strictly increasing mapping $n\mapsto m(n)$ such that $m(n)\to+\infty$ when $n\to+\infty$ such that, by putting $u_n=u_{m(n),n}$
\begin{equation*}
\lim_{n\to+\infty}\|u_{n}-u\|_{L^2(\Omega^*)}=0\quad\text{and}\quad\underset{n\rightarrow \infty}{\overline\lim}\Phi_\varepsilon^\enne[u_n]\leq\Phi[u],
\end{equation*}
thus proving the thesis.

% We define
%\begin{equation}\notag
%\bar v_n:=
%\begin{cases}
%v_n \quad &\text{in }\Omega_\varepsilon^n,\\
%u &\text{in }\Omega^*\setminus\Omega_\varepsilon^n.
%\end{cases}
%\end{equation}
%We point out that $\bar v_n$ weakly converges to $u$ in $L^2(\Omega^*)$.}

{\bf 2) Liminf condition}: Let $v_n$ be a weakly converging sequence to $u$ in $L^2(\Omega^*)$.
We can suppose that $v_n\in W^{1,p}(\Omega_\varepsilon^n,a_\varepsilon^n)$ and $$\displaystyle\underset{n\rightarrow \infty}{\underline\lim}\Phi_\varepsilon^{(n)}[v_n]<\infty$$
(otherwise the thesis follows trivially). We first assume that $v_n\in C^{1}(\overline{\Omega^n_\varepsilon})$ for every $n\in\N$.
Then there exists a constant $c$ independent of $n$ and $\varepsilon$ such that
\begin{equation}\label{stimap}
\Phi_\varepsilon^\enne[v_n]=\frac{1}{p}\int_{\Omega^n_\varepsilon}|v_n|^{p}\,\de\La_2+\frac{1}{p}\int_{\Omega_n}|\nabla v_n|^{p}\,\de\La_2+\frac{1}{p}\int_{\Sigma_{2\varepsilon}^n} a_\varepsilon^n|\nabla v_n|^{p}\,\de\La_2\leq c.
\end{equation}
We point out that, from \eqref{stimap}, we can suppose that $\|v_n\|_{W^{1,p}(\Omega_n)}\leq C$, where $C$ is independent of $n$. For every $n\in\N$ from Theorem \ref{exte} there exists a bounded linear operator $\Ext_J\colon W^{1,p}(\Omega_n)\to W^{1,p}(\R^2)$ such that
  $$ \|\Ext_J\,v_n\|_{W^{1,p}(\R^2)}\leq C\,\|v_n\|_{W^{1,p}({\Omega_n} )} \leq C,$$
with $C$ independent of $n$.\\
We denote by $\hat v_n=\Ext_J\,v_n|_{\Omega}$. Then $\hat v_n\in W^{1,p}(\Omega)$ and $\|\hat v_n\|_{W^{1,p}(\Omega)}\leq C$; hence there exists a subsequence, still denoted by $\hat v_n$, weakly converging to $\hat v$ in $W^{1,p}(\Omega)$. We point out that $\hat v_n$ strongly converges to $\hat v$ in $L^p(\Omega)$ and also in $L^2(\Omega)$ since $p\geq 2$. By proceeding as in the proof of condition a) of \cite[Theorem 3.5]{CPAA}, we prove that $\hat v= u$ a.e. on $\Omega$, hence in particular $\hat{v}_n$ weakly converges to $u$ in $W^{1,p}(\Omega)$.\\
As in \cite{CPAA}, this implies that $\chi_{\Omega_n}\nabla v_n$ weakly converges to $\nabla u$ in $L^p(\Omega)$ and, from the lower semicontinuity of the norm, we get that
\begin{equation*}
\underset{n\rightarrow \infty}{\underline\lim}\int_{\Omega_n}|\nabla v_n|^p\,\de\La_2 \geq \int_\Omega |\nabla u|^p\,\de\La_2.
\end{equation*}

Analogously, we can prove that $\chi_{\Omega_n} v_n$ weakly converges to $u$ in $L^p(\Omega)$. Therefore, we obtain that
\begin{equation*}
\underset{n\rightarrow \infty}{\underline\lim}\int_{\Omega^n_\varepsilon}|v_n|^p\,\de\La_2\geq\underset{n\rightarrow \infty}{\underline\lim}\int_{\Omega_n}|v_n|^p\,\de\La_2 \geq \int_\Omega |u|^p\,\de\La_2.
\end{equation*}

In the notations of the limsup part of the proof, we now introduce an operator $\mathcal{M}_\varepsilon\colon C^1(\overline\Sigma_\varepsilon)\to C^{0,1}(T^0)$ defined for every $0<\varepsilon<\varepsilon_0$. Let $L_\varepsilon$ be the segment given by the intersection between $\Sigma_{l,\varepsilon}$ and the straight line orthogonal to $S_l$ at the point $(x_l,y_l)$ and let $\ell_\varepsilon$ be its length. Hence, if $P=(x,y)\in T^0\setminus V_0$, we set %(CONTROLLARE!)
\begin{equation}\label{operatore di media}
\mathcal{M}_\varepsilon(h(x_l,y_l))=\frac{1}{\ell_\varepsilon}\int_{L_\varepsilon} h(x(\ell),y(\ell))\,\de\ell.
\end{equation}
If $P\in V_0$, we put $\mathcal{M}_\varepsilon(h(P))=h(P)$. With this definition, it holds that $\mathcal{M}_\varepsilon(h)\in C^{0,1}(T^0)$.\\
We now set
\begin{equation}\label{definizione v tilde}
\tilde{v}_n(x,y)=\mathcal{M}_\varepsilon(v_n\circ\psi_{i|n})\circ\psi^{-1}_{i|n}(x,y).
\end{equation}
We point out that $v_n$ and $\tilde{v}_n$ coincide on the set of vertices $\mathcal{V}^n$. %Moreover it holds that (SPERO!)
%\begin{equation*}
%\|v_n-\tilde{v}_n\|_{L^p(K_n)}^p\xrightarrow[n\to+\infty]{} 0.
%\end{equation*}
We start by proving the following
\begin{equation}\label{primastimaliminf}
\E_p^\enne[\tilde{v}_n]\leq\frac{\delta_n^{1-p}}{p}\int_{\Sigma_\varepsilon^n}|\nabla v_n|^p w_\varepsilon^n\,\de\La_2. 
\end{equation}
By proceeding as in \cite{CPAA}, we can prove that
\begin{equation}\label{intermedia}
\E_p^{(n)}[\tilde{v}_n]=\frac{4^{(p-1)n}}{p}\sum_{j=1}^{3\mathcal{N}}(\tilde{v}_n(P_{j+1})-\tilde{v}_n(P_j))^p\leq\frac{1}{p}\left(\frac{4}{3}\right)^{(p-1)n}\sum_{j=1}^{3\mathcal{N}}\int_{M_j}|\nabla\tilde{v}_n|^p\,\de\ell,
\end{equation}
where $\mathcal{N}=4^n$ and $M_j$, for $j=1,\dots,3\mathcal{N}$, are the segments of $K_n$.\\
We make the explicit computations only on the term $\psi_{i|n}(AB)$. We set $h(x,y)=(v_n\circ\psi_{i|n})(x,y)$. By a change of variables, we have that
\begin{equation*}
\int_{\psi_{i|n}(AB)}|\nabla\tilde{v}_n|^p\,\de\ell=3^{n(p-1)}\int_0^1|\nabla_x\mathcal{M}_\varepsilon(h(x,0))|^p\,\de x.
\end{equation*}
By using the definition of $\mathcal{M}_\varepsilon$ we can split the above integral as follows:
\begin{equation*}
\begin{split}
&\int_0^1|\nabla_x\mathcal{M}_\varepsilon(h(x,0))|^p\,\de x=\int_0^\frac{\varepsilon}{C_1}\left(\frac{2}{C_1x}\int_{-\frac{C_1x}{2}}^0 h(x,y)\,\de y\right)^p_x\,\de x\\[3mm]
+&\int_\frac{\varepsilon}{C_1}^{1-\frac{\varepsilon}{C_1}}\left(\frac{2}{\varepsilon}\int_{-\frac{\varepsilon}{2}}^0 h(x,y)\,\de y\right)^p_x\,\de x+\int_{1-\frac{\varepsilon}{C_1}}^1\left(\frac{2}{C_1-C_1x}\int_\frac{C_1x-C_1}{2}^0 h(x,y)\,\de y\right)^p_x\,\de x=:I_1+I_2+I_3.
\end{split}
\end{equation*}
We start by evaluating $I_2$. Let $R_1$ be the rectangle of vertices $P_1$, $P_2$, $P_3$ and $P_4$ (in the notations of the limsup part). By applying H\"older inequality, we get
\begin{equation*}
I_2\leq\frac{2}{\varepsilon}\int_\frac{\varepsilon}{C_1}^{1-\frac{\varepsilon}{C_1}}\int_{-\frac{\varepsilon}{2}}^0 |h_x(x,y)|^p\,\de y\de x.
\end{equation*}
Moreover, since on $\psi_{i|n}(R_1)$ it holds that $w_\varepsilon^n(x,y)=\frac{2\cdot3^n}{\varepsilon}$, we have
\begin{equation}\label{stimaI2}
3^{n(p-1)}I_2\leq\frac{2\cdot 3^{n(p-1)}}{\varepsilon}\int_\frac{\varepsilon}{C_1}^{1-\frac{\varepsilon}{C_1}}\int_{-\frac{\varepsilon}{2}}^0 |\nabla h(x,y)|^p\,\de y\de x\leq \int_{\psi_{i|n}(R_1)}|\nabla v_n|^p w_\varepsilon^n\,\de\La_2.
\end{equation}

We now switch to $I_1$. In the notations of the limsup part, let $T_{1,1}$ be the triangle of vertices $A$, $P_1$ and $P_3$. Let
\begin{equation*}
F(x)=\frac{2}{C_1x}\int_{-\frac{C_1x}{2}}^0 h(x,y)\,\de y.
\end{equation*}
Then we have that
\begin{equation*}
\begin{split}
F'(x)&=-\frac{2}{C_1x^2}\int_{-\frac{C_1x}{2}}^0 h(x,y)\,\de y+\frac{2}{C_1x}\left(\int_{-\frac{C_1x}{2}}^0 h_x(x,y)\,\de y+\frac{C_1}{2}h\left(x,-\frac{C_1x}{2}\right)\right)\\[2mm]
&=-\frac{2}{C_1x^2}\int_{-\frac{C_1x}{2}}^0 h(x,y)\,\de y+\frac{2}{C_1x}\int_{-\frac{C_1x}{2}}^0 h_x(x,y)\,\de y+\frac{1}{x}h\left(x,-\frac{C_1x}{2}\right).
\end{split}
\end{equation*}

Since $h(x,y)=(v_n\circ\psi_{i|n})(x,y)$ belongs to $C^1(\overline{\Omega^n_\varepsilon})$ by hypothesis, from the mean value theorem we get that
\begin{equation*}
F'(x)=\frac{2}{C_1x}\int_{-\frac{C_1x}{2}}^0 h_x(x,y)\,\de y-\frac{1}{x}\left(h(x,\xi)-h\left(x,-\frac{C_1x}{2}\right)\right),
\end{equation*}
for some $\xi\in[-\frac{C_1x}{2},0]$. Hence
\begin{equation*}
F'(x)=\frac{2}{C_1x}\int_{-\frac{C_1x}{2}}^0 h_x(x,y)\,\de y-\frac{1}{x}\int_{-\frac{C_1x}{2}}^\xi h_y(x,y)\,\de y.
\end{equation*}
Going back to $I_1$, by using the convexity of the map $t\to |t|^p$ and H\"older inequality and recalling that $\xi\in[-\frac{C_1x}{2},0]$ we get
\begin{equation*}
\begin{split}
I_1&=\int_0^\frac{\varepsilon}{C_1}|F'(x)|^p\,\de x\leq 2^{p-1}\int_0^\frac{\varepsilon}{C_1}\left[\frac{2^p}{C_1^p x^p}\left(\int^0_{-\frac{C_1x}{2}}h_x(x,y)\,\de y\right)^p+\frac{1}{x^p}\left(\int^\xi_{-\frac{C_1x}{2}}h_y(x,y)\,\de y\right)^p\,\right]\,\de x\\[3mm]
&\leq 2^{p-1}\int_0^\frac{\varepsilon}{C_1}\left[\frac{2^p}{C_1^p x^p}\int^0_{-\frac{C_1x}{2}}|h_x(x,y)|^p\,\de y\left(\frac{C_1x}{2}\right)^{p-1}+\frac{1}{x^p}\int^\xi_{-\frac{C_1x}{2}}|h_y(x,y)|^p\,\de y\left(\xi+\frac{C_1x}{2}\right)^{p-1}\,\right]\,\de x\\[3mm]
&\leq 2^{p-1}\int_0^\frac{\varepsilon}{C_1}\left[\frac{2}{C_1x}\int^0_{-\frac{C_1x}{2}}|h_x(x,y)|^p\,\de y+\frac{C_1^{p-1}}{2^{p-1}x}\int^0_{-\frac{C_1x}{2}}|h_y(x,y)|^p\,\de y\,\right]\,\de x\\[3mm]
&=\int_0^\frac{\varepsilon}{C_1}\left[\frac{2^p}{C_1x}\int^0_{-\frac{C_1x}{2}}|h_x(x,y)|^p\,\de y+\frac{C_1^{p-1}}{x}\int^0_{-\frac{C_1x}{2}}|h_y(x,y)|^p\,\de y\,\right]\,\de x\\[3mm]
&\leq\int_0^\frac{\varepsilon}{C_1}\frac{2^p+C_1^p}{C_1x}\int^0_{-\frac{C_1x}{2}}\left(|h_x(x,y)|^p+|h_y(x,y)|^p\right)\,\de y\de x.
\end{split}
\end{equation*}
Since $w_\varepsilon^n(x,y)=\frac{3^n(2^p+C_1^p)}{C_1x}$ on $\psi_{i|n}(T_{1,1})$, we have that
\begin{equation}\label{stimaI1}
3^{n(p-1)}I_1\leq 3^{n(p-1)}\int_0^\frac{\varepsilon}{C_1}\frac{2^p+C_1^p}{C_1x}\int^0_{-\frac{C_1x}{2}}|\nabla h(x,y)|^p\,\de y\de x\leq \int_{\psi_{i|n}(T_{1,1})}|\nabla v_n|^p w_\varepsilon^n\,\de\La_2.
\end{equation}

We now estimate $I_3$. Again in the notations of the limsup part, let $T_{1,2}$ be the triangle of vertices $B$, $P_2$ and $P_4$. Let
\begin{equation*}
G(x):=\frac{2}{C_1-C_1x}\int_\frac{C_1x-C_1}{2}^0 h(x,y)\,\de y.
\end{equation*}
Then
\begin{equation*}
\begin{split}
G'(x)&=\frac{2C_1}{(C_1-C_1x)^2}\int_\frac{C_1x-C_1}{2}^0 h(x,y)\,\de y+\frac{2}{C_1-C_1x}\left(\int_\frac{C_1x-C_1}{2}^0 h_x(x,y)\,\de y-\frac{C_1}{2}h\left(x,\frac{C_1x-C_1}{2}\right)\right)\\[3mm]
&=\frac{2C_1}{(C_1-C_1x)^2}\int_\frac{C_1x-C_1}{2}^0 h(x,y)\,\de y+\frac{2}{C_1-C_1x}\int_\frac{C_1x-C_1}{2}^0 h_x(x,y)\,\de y-\frac{C_1}{C_1-C_1x}h\left(x,\frac{C_1x-C_1}{2}\right).
\end{split}
\end{equation*}

Again from the mean value theorem, we get that
\begin{equation*}
G'(x)=\frac{2}{C_1-C_1x}\int_\frac{C_1x-C_1}{2}^0 h_x(x,y)\,\de y+\frac{C_1}{C_1-C_1x}\left(h(x,\xi)-h\left(x,\frac{C_1x-C_1}{2}\right)\right),
\end{equation*}
for some $\xi\in[\frac{C_1x-C_1}{2},0]$. Hence
\begin{equation*}
G'(x)=\frac{2}{C_1-C_1x}\int_\frac{C_1x-C_1}{2}^0 h_x(x,y)\,\de y+\frac{C_1}{C_1-C_1x}\int_\frac{C_1x-C_1}{2}^\xi h_y(x,y)\,\de y.
\end{equation*}

As above, we can estimate $I_3$ as follows:
\begin{equation*}
\begin{split}
&I_3=\int^1_{1-\frac{\varepsilon}{C_1}}|G'(x)|^p\,\de x\\[3mm]
&\leq 2^{p-1}\int^1_{1-\frac{\varepsilon}{C_1}}\left[\frac{2^p}{(C_1- C_1x)^p}\left(\int^0_{\frac{C_1x-C_1}{2}}h_x(x,y)\,\de y\right)^p+\frac{C_1^p}{(C_1- C_1x)^p}\left(\int^\xi_{\frac{C_1x-C_1}{2}}h_y(x,y)\,\de y\right)^p\,\right]\,\de x\\[3mm]
&\!\!\!\!\!\!\!\!\!\!\!\!\!\leq 2^{p-1}\int^1_{1-\frac{\varepsilon}{C_1}}\left[\frac{2^p(C_1-C_1x)^{p-1}}{2^{p-1}(C_1- C_1x)^p}\int^0_{\frac{C_1x-C_1}{2}}|h_x(x,y)|^p\,\de y+\frac{C_1^p}{(C_1- C_1x)^p}\int^\xi_{\frac{C_1x-C_1}{2}}|h_y(x,y)|^p\,\de y\left(\xi+\frac{C_1-C_1x}{2}\right)^{p-1}\,\right]\,\de x\\[3mm]
&\leq 2^{p-1}\int^1_{1-\frac{\varepsilon}{C_1}}\left[\frac{2}{C_1-C_1x}\int^0_{\frac{C_1x-C_1}{2}}|h_x(x,y)|^p\,\de y+\frac{C_1^p}{2^{p-1}(C_1-C_1x)}\int^0_{\frac{C_1x-C_1}{2}}|h_y(x,y)|^p\,\de y\,\right]\,\de x\\[3mm]
&=\int^1_{1-\frac{\varepsilon}{C_1}}\left[\frac{2^p}{C_1-C_1x}\int^0_{\frac{C_1x-C_1}{2}}|h_x(x,y)|^p\,\de y+\frac{C_1^{p}}{C_1-C_1x}\int^0_{\frac{C_1x-C_1}{2}}|h_y(x,y)|^p\,\de y\,\right]\,\de x\\[3mm]
&\leq\int^1_{1-\frac{\varepsilon}{C_1}}\frac{2^p+C_1^p}{C_1-C_1x}\int^0_{\frac{C_1x-C_1}{2}}\left(|h_x(x,y)|^p+|h_y(x,y)|^p\right)\,\de y\de x.
\end{split}
\end{equation*}

Since $w_\varepsilon^n(x,y)=\frac{3^n(2^p+C_1^p)}{C_1-C_1x}$ on $\psi_{i|n}(T_{1,2})$, we have that
\begin{equation}\label{stimaI3}
3^{n(p-1)}I_3\leq 3^{n(p-1)}\int^1_{1-\frac{\varepsilon}{C_1}}\frac{2^p+C_1^p}{C_1-C_1x}\int^0_{\frac{C_1x-C_1}{2}}|\nabla h(x,y)|^p\,\de y\de x\leq \int_{\psi_{i|n}(T_{1,2})}|\nabla v_n|^p w_\varepsilon^n\,\de\La_2.
\end{equation}

By iterating the reasoning of \eqref{stimaI1}, \eqref{stimaI2} and \eqref{stimaI3} to the other segments, from \eqref{intermedia} we obtain \eqref{primastimaliminf}.

By proceeding as in \cite[Proposition 3.8]{CPAA}, we have that $\tilde{v}_n$ is equi-H\"older continuous on $K_n$ with exponent $\beta=\frac{d_f}{p'}$. Hence the function $\tilde{v}_n$ is defined on the discrete set $\mathcal{V}^n$, so we extend it to a continuous function $H\tilde{v}_n$ on $K$. This extension is unique and it is obtained by constructing the discrete harmonic extension $H\tilde{v}_n|_{\mathcal{V}_\star}$ of $\tilde{v}_n|_{\mathcal{V}^n}$ to the set $\mathcal{V}_\star$ and then taking the unique continuous extension of $H\tilde{v}_n|_{\mathcal{V}_\star}$ to $K$. This iterative process is known as \emph{decimation} in the physics literature (see~\cite{kozlov},~\cite{kusuoka} and also \cite{captesi}).\\
By proceeding as in the proof of Theorem 3.5 in \cite{CPAA} (liminf part), we can prove that
\begin{equation}\label{aux2}
\E_p[u]\leq\underset{n\rightarrow \infty}{\underline\lim}\E_p[H\tilde{v}_n].
\end{equation}
From the properties of harmonic extensions, in particular it follows that
\begin{equation*}
\E_p[H\tilde{v}_n]=\E_p^{(n)}[\tilde{v}_n|_{\mathcal{V}^n}].
\end{equation*}
This, along with \eqref{primastimaliminf}, implies that
\begin{equation}\label{aux3}
\E_p[u]\leq\underset{n\rightarrow \infty}{\underline\lim}\frac{\delta_n^{1-p}}{p}\int_{\Sigma_\varepsilon^n}|\nabla v_n|^p w_\varepsilon^n\,\de\La_2.
\end{equation}
The thesis then follows from the liminf properties of the sum.

We now remove the assumption $v_n\in C^{1}(\overline{\Omega^n_\varepsilon})$ for every $n\in\N$. We have that $C^{1}(\overline{\Omega^n_\varepsilon})$ is dense in $W^{1,p}(\Omega^n_\varepsilon,a^n_\varepsilon)$ and in $L^2(\Omega^n_\varepsilon)$; hence, there exists $v_n^*\in C^1(\overline{\Omega^n_\varepsilon})$ such that for every $v_n\in W^{1,p}(\Omega^n_\varepsilon,a^n_\varepsilon)$
\begin{equation}\label{densita}
\left|\Phi_\varepsilon^\enne[v_n^*]-\Phi_\varepsilon^\enne[v_n]\right|\leq\frac{1}{n}\quad\text{and}\quad \|v_n^*-v_n\|_{L^2(\Omega^n_\varepsilon)}\leq\frac{1}{n}.
\end{equation}
Let now $v_n$ be weakly converging to $u$ in $L^2(\Omega^*)$. We set
\begin{equation}\notag
\bar v^*_n:=
\begin{cases}
v_n^* \quad &\text{in }\Omega_\varepsilon^n,\\
u &\text{in }\Omega^*\setminus\Omega_\varepsilon^n.
\end{cases}
\end{equation}
We point out that \eqref{densita} implies that also $\bar v_n^*$ converges weakly to $u$ in $L^2(\Omega^*)$. Indeed, for every $\varphi\in L^2(\Omega^*)$
\begin{equation*}%\xrightarrow[n\to+\infty]{}0,\leq\|v_n^*-v_n\|_{L^2(\Omega^n_\varepsilon)}\|\varphi\|_{L^2(\Omega^*)}.
\begin{split}
&\int_{\Omega^*}(\bar v_n^*-u)\varphi\,\de\La_2=\int_{\Omega^*}(\bar v_n^*-v_n)\varphi\,\de\La_2+\int_{\Omega^*}(v_n-u)\varphi\,\de\La_2=\int_{\Omega^*\setminus\Omega_\varepsilon^n}(u-v_n)\varphi\,\de\La_2\\[3mm]
&+\int_{\Omega_\varepsilon^n}(v_n^*-v_n)\varphi\,\de\La_2+\int_{\Omega^*}(v_n-u)\varphi\,\de\La_2.
\end{split}
\end{equation*}
The second term on the right-hand side of the above equality tends to zero from \eqref{densita} and the third term also vanishes from the weak convergence of $v_n$ to $u$ in $L^2(\Omega^*)$. As to the first term, we have that
\begin{equation}\notag
\int_{\Omega^*\setminus\Omega_\varepsilon^n}(u-v_n)\varphi\,\de\La_2=\int_{\Omega^*\setminus\Omega}(u-v_n)\varphi\,\de\La_2-\int_{\Omega_\varepsilon^n\setminus\Omega}(u-v_n)\varphi\,\de\La_2+\int_{\Omega\setminus\Omega_\varepsilon^n}(u-v_n)\varphi\,\de\La_2;
\end{equation}
the first term vanishes from the weak convergence, while the second and third terms vanish since $|\Omega_\varepsilon^n\setminus\Omega|\xrightarrow[n\to+\infty]{}0$ and $v_n$ is equibounded. Hence $\bar v_n^*$ converges weakly to $u$ in $L^2(\Omega^*)$.\\
Now, since $\bar v_n^*$ weakly converges to $u$ in $L^2(\Omega^*)$ and $v_n^*\in C^1(\overline{\Omega^n_\varepsilon})$, from the first part of the liminf proof we have that
\begin{equation*}
\Phi[u]\leq\underset{n\rightarrow \infty}{\underline\lim}\Phi_\varepsilon^\enne[v_n^*].
\end{equation*}
Hence, we get from \eqref{densita}
\begin{equation*}
\begin{split}
&\underset{n\rightarrow \infty}{\underline\lim}\Phi_\varepsilon^\enne[v_n]=\underset{n\rightarrow \infty}{\underline\lim}\left(\Phi_\varepsilon^\enne[v_n]-\Phi_\varepsilon^\enne[v_n^*]+\Phi_\varepsilon^\enne[v_n^*]\right)\geq\underset{n\rightarrow \infty}{\underline\lim}\left(\Phi_\varepsilon^\enne[v_n]-\Phi_\varepsilon^\enne[v_n^*]\right)+\underset{n\rightarrow \infty}{\underline\lim}\Phi_\varepsilon^\enne[v_n^*]\\[2mm]
&=0+\underset{n\rightarrow \infty}{\underline\lim}\Phi_\varepsilon^\enne[v_n^*]\geq\Phi[u].
\end{split}
\end{equation*}

\end{proof}

\section{Existence, uniqueness and convergence results}\label{exunconv}
\setcounter{equation}{0}

From now on, let $f\in L^{p'}(\Omega^*)$.

Let now 
\begin{equation*}
\Phi_\varepsilon^\enne(u,v)=\int_{\Omega^n_\varepsilon}|u|^{p-2}u\,v\,\de\La_2+\int_{\Omega^n_\varepsilon}a_\varepsilon^n(x,y)|\nabla u|^{p-2}\nabla u\nabla v\,\de\La_2
\end{equation*}
and
\begin{equation*}
\Phi(u,v)=\int_{\Omega}|u|^{p-2}u\,v\,\de\La_2+\int_{\Omega}|\nabla u|^{p-2}\nabla u\nabla v\,\de\La_2+\E_p(u,v).
\end{equation*}

We say that problem $(P_\varepsilon^n)$ formally stated before admits a weak solution $u_\varepsilon^n\in W^{1,p}(\Omega_\varepsilon^n,a^n_\varepsilon)$ if it satisfies
\begin{equation*}
\Phi_\varepsilon^\enne(u_\varepsilon^n,v)=\int_{\Omega_\varepsilon^n} fv\,\de\La_2\quad\forall\,v\in W^{1,p}(\Omega_\varepsilon^n,a^n_\varepsilon).
\end{equation*}

We recall that Proposition \ref{dscipref} in particular implies that $\Phi_\varepsilon^n$ is coercive. Hence, from Theorems 1.5.6 and 1.5.8 in \cite{badialeserra}, $\Phi_\varepsilon^n$ admits a unique minimum point $u_\varepsilon^n\in W^{1,p}(\Omega_\varepsilon^n,a^n_\varepsilon)$. Integrating by parts, we prove that such unique minimum point is also the unique weak solution of problem $(P_\varepsilon^n)$ in the following sense: for $f\in L^{p'}(\Omega^*)$ %(RICONTROLLARE!)
\begin{equation*}
\!\!\!\!\!\!\!\!\!\!\!\!\!\!\!\!\!\!\!\!\!\!\!\!\!\!\!(\overline{P_\varepsilon^n})
\begin{cases}
-\dive(a_\varepsilon^n(x,y)|\nabla u_\varepsilon^n|^{p-2}\nabla u_\varepsilon^n)+|u_\varepsilon^n|^{p-2}u_\varepsilon^n=f &\text{a.e. in}\,\,\Omega_\varepsilon^n,\\[2mm]
%u_\varepsilon^n=0 &\text{on}\,\,\partial\Omega_\varepsilon^n,\\[2mm]
[u_\varepsilon^n]=0 &\text{on}\,\,K_n\text{ and on}\,\,\Gamma_\varepsilon^n,\\[2mm]
\left\langle\displaystyle|\nabla u_\varepsilon^n|^{p-2}\frac{\partial u_\varepsilon^n}{\partial\nu_1},v\right\rangle_{W^{-\frac{1}{p'},p'}(K_n),W^{\frac{1}{p'},p}(K_n)}+\left\langle\displaystyle|\nabla u_\varepsilon^n|^{p-2}\frac{\partial u_\varepsilon^n}{\partial\nu_3},v\right\rangle_{W^{-\frac{1}{p'},p'}(\partial\Lambda_\varepsilon^n),W^{\frac{1}{p'},p}(\partial\Lambda_\varepsilon^n)}\\[4mm]
+\delta_n^{1-p}\left\langle\displaystyle w_\varepsilon^n(x,y)|\nabla u_\varepsilon^n|^{p-2}\frac{\partial u_\varepsilon^n}{\partial\nu_2},v\right\rangle_{W^{-\frac{1}{p'},p'}(\partial\Sigma_\varepsilon^n),W^{\frac{1}{p'},p}(\partial\Sigma_\varepsilon^n)}=0\quad &\text{for every}\,\,v\in W^{1,p}(\Omega_\varepsilon^n),
\end{cases}
\end{equation*}

\medskip

where $\Lambda_\varepsilon^n:=\Sigma^n_{2\varepsilon}\setminus\Sigma_\varepsilon^n$ and $\nu_1$, $\nu_2$ and $\nu_3$ denote the outward unit normal vectors to $\Omega_n$, $\Sigma_\varepsilon^n$ and $\Lambda_\varepsilon^n$ respectively.

%\begin{equation*}
%(\bar{P_\varepsilon^n})\begin{cases}
%-\dive(w_\varepsilon^n(x,y)|\nabla u_\varepsilon^n|^{p-2}\nabla u_\varepsilon^n)=f &\text{a.e. in}\,\,\Omega^n_\varepsilon,\\[2mm]
%\E_p(u,v)+\left\langle\displaystyle\frac{\partial u}{\partial\nu}|\nabla u|^{p-2},v\right\rangle=0\quad &\text{for every}\,\,v\in D(\Psi).
%\end{cases}
%\end{equation*}

%We state formally the fractal problem $(P)$: for $f\in L^{p'}(\Omega^*)$ (CONTROLLARE I SEGNI)
%\begin{equation*}
%(P)\begin{cases}
%-\Delta_p u=f &\text{in}\,\,\Omega,\\[2mm]
%\E_p[u]+\displaystyle\frac{\partial u}{\partial\nu}|\nabla u|^{p-2}=0\quad &\text{on}\,\,K.
%\end{cases}
%\end{equation*}

\bigskip

We say that the fractal problem admits a weak solution $u\in D(\Phi)$ if it satisfies
\begin{equation*}
\Phi(u,v)=\int_{\Omega} fv\,\de\La_2\quad\forall\,v\in D(\Phi).
\end{equation*}

We recall that Proposition \ref{dsci} in particular implies that $\Phi$ is coercive. As in the pre-homogenized case, $\Phi$ admits a unique minimum point $u\in D(\Phi)$ which is the unique weak solution of problem $(P)$ in the following sense:
\begin{equation*}
(\bar P)\begin{cases}
-\Delta_p u+|u|^{p-2}u=f &\text{a.e. in}\,\,\Omega,\\[2mm]
\E_p(u,v)+\left\langle\displaystyle\frac{\partial u}{\partial\nu}|\nabla u|^{p-2},v\right\rangle_{_{(B^{p,p}_\alpha(K))',B^{p,p}_\alpha(K)}}=0\quad &\text{for every}\,\,v\in D(\E_p).
\end{cases}
\end{equation*}

The following convergence result holds.
\begin{theorem} Let $u_\varepsilon^n\equiv u_n$ and $u$ be the unique weak solutions of problems $(\overline{P_\varepsilon^n})$ and $(\bar P)$ respectively. Then $\chi_{\Omega_\varepsilon^n}u_n\xrightarrow[n\to+\infty]{}u$ in $L^2(\Omega^*)$. %and
%\begin{equation}\label{convfortefunz}
%\lim_{n\to+\infty} \Phi_\varepsilon^\enne[u_n]=\Phi[u].
%\end{equation}
\end{theorem}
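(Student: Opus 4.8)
The standard route to this kind of result is to deduce the convergence of minimizers from the M-convergence of the energy functionals (Theorem \ref{mconvergenza}), together with the uniform coercivity of the pre-homogenized functionals. Concretely, recall that $u_\varepsilon^n$ is the unique minimizer in $L^2(\Omega^*)$ of the functional $v\mapsto\Phi_\varepsilon^\enne[v]-\int_{\Omega_\varepsilon^n}fv\,\de\La_2$ (extended by $+\infty$ outside $W^{1,p}(\Omega_\varepsilon^n,a_\varepsilon^n)$), and $u$ is the unique minimizer of $v\mapsto\Phi[v]-\int_\Omega fv\,\de\La_2$. Since $f\in L^{p'}(\Omega^*)\subset L^2(\Omega^*)$ (recall $p\geq 2$ so $p'\leq 2$ and $\Omega^*$ is bounded), the linear perturbation $v\mapsto-\int fv$ is continuous on $L^2(\Omega^*)$ and hence is compatible with M-convergence: $\Phi_\varepsilon^\enne-\langle f,\cdot\rangle$ still M-converges to $\Phi-\langle f,\cdot\rangle$. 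This is the abstract machinery (see e.g. Mosco \cite{mosco2}, or \cite[Theorem~3.5 and its corollary]{CPAA}) which turns M-convergence of functionals into convergence of minimizers.

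\textbf{Key steps.} First I would establish a uniform a priori bound: testing $(\overline{P_\varepsilon^n})$ with $u_n$ itself gives $p\,\Phi_\varepsilon^\enne[u_n]=\int_{\Omega_\varepsilon^n}fu_n\,\de\La_2$, and by Young's inequality together with the coercivity from Proposition \ref{dscipref} we obtain $\Phi_\varepsilon^\enne[u_n]\leq C$ and in particular $\|u_n\|_{L^2(\Omega_\varepsilon^n)}\leq C$ with $C$ independent of $n$. Extending $u_n$ by $u$ (or by $0$) on $\Omega^*\setminus\Omega_\varepsilon^n$ we get a bounded sequence in $L^2(\Omega^*)$, so up to a subsequence $\chi_{\Omega_\varepsilon^n}u_n\rightharpoonup u^*$ weakly in $L^2(\Omega^*)$. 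Second, I apply the liminf inequality (Definition \ref{def4.1}(a)) to this sequence to get $\underline\lim_n\Phi_\varepsilon^\enne[u_n]\geq\Phi[u^*]$, and in particular $u^*\in D(\Phi)$. Third, for any fixed $w\in D(\Phi)$, I use the limsup inequality (Definition \ref{def4.1}(b)) to produce a recovery sequence $w_n\to w$ strongly in $L^2(\Omega^*)$ with $\overline\lim_n\Phi_\varepsilon^\enne[w_n]\leq\Phi[w]$; comparing the energies, using that $u_n$ minimizes the perturbed pre-homogenized functional and that $\int_{\Omega_\varepsilon^n}fw_n\to\int_\Omega fw$ (which follows from $|\Omega_\varepsilon^n\setminus\Omega|\to0$, equiboundedness of $w_n$ and strong $L^2$ convergence) and that $\int_{\Omega_\varepsilon^n}fu_n\to\int_\Omega fu^*$ (weak–strong, again using $|\Omega_\varepsilon^n\setminus\Omega|\to0$), I obtain
\begin{equation*}
\Phi[u^*]-\int_\Omega fu^*\,\de\La_2\leq\underline\lim_n\Big(\Phi_\varepsilon^\enne[u_n]-\int_{\Omega_\varepsilon^n}fu_n\,\de\La_2\Big)\leq\overline\lim_n\Big(\Phi_\varepsilon^\enne[w_n]-\int_{\Omega_\varepsilon^n}fw_n\,\de\La_2\Big)\leq\Phi[w]-\int_\Omega fw\,\de\La_2.
\end{equation*}
Since this holds for all $w\in D(\Phi)$ and the limit functional has a \emph{unique} minimizer by Proposition \ref{dsci} and strict convexity, we conclude $u^*=u$; moreover the chain of inequalities forces $\Phi_\varepsilon^\enne[u_n]-\int_{\Omega_\varepsilon^n}fu_n\to\Phi[u]-\int_\Omega fu$, i.e. convergence of the energies. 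By a Urysohn argument the whole sequence converges, not just a subsequence.

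\textbf{Upgrading to strong convergence.} The final step — passing from $\chi_{\Omega_\varepsilon^n}u_n\rightharpoonup u$ weakly to strong convergence in $L^2(\Omega^*)$ — is the main point requiring care. The convergence of the energies $\Phi_\varepsilon^\enne[u_n]\to\Phi[u]$, combined with weak convergence, is what yields strong convergence. More precisely, from $\Phi_\varepsilon^\enne[u_n]\to\Phi[u]$ and the liminf inequality applied separately to the $L^p$-bulk term, the gradient term on $\Omega_n$ and the fiber term (each of which is weakly lower semicontinuous in its own right, as in the proof of Theorem \ref{mconvergenza}), none of the three pieces of energy can be lost in the limit; in particular $\int_{\Omega_\varepsilon^n}|u_n|^p\,\de\La_2\to\int_\Omega|u|^p\,\de\La_2$, which together with weak $L^p$ convergence gives strong convergence in $L^p(\Omega)$ by the uniform convexity of $L^p$ (Radon–Riesz). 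One then controls the contribution of the fibers $\Sigma_{2\varepsilon}^n$ by $\|u_n\|_{L^\infty}$-type or Hölder bounds exactly as in \eqref{limitatezza}–\eqref{condizione3} in the limsup part, using $|\Sigma_{2\varepsilon}^n|\to0$, to see that $\|u_n\|_{L^2(\Omega_\varepsilon^n\setminus\Omega)}\to0$. Since $p\geq2$ and $\Omega^*$ is bounded, $L^p(\Omega)\hookrightarrow L^2(\Omega)$ continuously, so strong $L^p(\Omega)$ convergence plus the vanishing fiber contribution gives $\chi_{\Omega_\varepsilon^n}u_n\to u$ strongly in $L^2(\Omega^*)$. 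I expect the delicate point to be justifying that the energy does not concentrate on the shrinking fibers in the limit — i.e. making rigorous the splitting of $\lim\Phi_\varepsilon^\enne[u_n]$ into the three separately-lower-semicontinuous contributions — which is handled by the average-value operator $\mathcal{M}_\varepsilon$ and the decimation/harmonic-extension estimates \eqref{primastimaliminf}–\eqref{aux3} already developed in the proof of Theorem \ref{mconvergenza}.
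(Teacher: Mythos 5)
Your strategy is sound, but it proves the theorem by a genuinely different mechanism than the paper. The paper's proof is much shorter: it invokes Attouch's result (Theorem 1.10 in \cite{Att}) that, under M-convergence, every cluster point of the sequence of minimizers is a minimizer of the limit functional, and it obtains the needed strong $L^2(\Omega^*)$ relative compactness not from convergence of energies but from an a priori bound: since $p\geq 2$, the equiboundedness of $u_n$ in $W^{1,p}(\Omega_\varepsilon^n)$ yields an $H^1(\Omega^*)$ bound for the extended functions, and the compact embedding of $H^1(\Omega^*)$ into $L^2(\Omega^*)$ gives strong $L^2$ convergence of a subsequence; uniqueness of the minimizer of $\Phi$ (Proposition \ref{dsci}) then identifies the limit and gives convergence of the full sequence. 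You instead rerun the fundamental theorem of $\Gamma$-convergence by hand (weak $L^2$ compactness, liminf inequality, recovery sequences, comparison of the perturbed energies) and then upgrade weak to strong convergence via convergence of energies plus uniform convexity of $L^p$ (Radon--Riesz). This works in principle, but it requires the three terms of $\Phi_\varepsilon^\enne$ to be \emph{separately} lower semicontinuous along the actual minimizers, which are not $C^1$: in the proof of Theorem \ref{mconvergenza} the piecewise liminf estimates are established for $C^1$ sequences and transferred to general ones by a density argument \eqref{densita} that controls only the \emph{total} functional, not each piece. To make your splitting rigorous you would need to strengthen that step, e.g.\ approximate in the weighted $W^{1,p}(\Omega_\varepsilon^n,a_\varepsilon^n)$ norm so that each of the three terms is perturbed by $o(1)$. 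The paper's compactness route avoids this issue entirely.

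Two concrete slips should also be fixed. First, $L^{p'}(\Omega^*)\subset L^2(\Omega^*)$ is false for $p>2$ on a bounded domain (the inclusion goes the other way, since $p'\leq 2$), so $f$ need not belong to $L^2$ and you cannot pass to the limit in $\int_{\Omega_\varepsilon^n}fu_n\,\de\La_2$, nor claim $L^2$-continuity of the linear perturbation, from weak $L^2$ convergence alone. The remedy is already in your a priori estimate: testing with $u_n$ bounds $\|u_n\|_{L^p(\Omega_\varepsilon^n)}$ uniformly, so up to a subsequence $\chi_{\Omega_\varepsilon^n}u_n$ converges weakly in $L^p(\Omega^*)$ and the pairing with $f\in L^{p'}(\Omega^*)$ passes to the limit. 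Second, the bound \eqref{limitatezza} is an $L^\infty$ estimate for the recovery sequence built from a H\"older continuous function; the minimizers $u_n$ are not known a priori to be bounded, so you cannot control the fiber contribution that way. In fact you do not need to: once you know $\chi_{\Omega_\varepsilon^n}u_n\rightharpoonup\chi_\Omega u$ weakly in $L^p(\Omega^*)$ (the weak limit vanishes outside $\Omega$ because $|\Omega_\varepsilon^n\setminus\Omega|\to 0$) and $\|\chi_{\Omega_\varepsilon^n}u_n\|_{L^p(\Omega^*)}\to\|\chi_\Omega u\|_{L^p(\Omega^*)}$, Radon--Riesz gives strong convergence in $L^p(\Omega^*)$, fibers included, and hence in $L^2(\Omega^*)$ since $p\geq 2$ and $\Omega^*$ is bounded.
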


\begin{proof} Since $\Phi_\varepsilon^\enne$ M-converges to $\Phi$ in $L^2(\Omega^*)$, from Theorem 1.10 in \cite{Att} we have that every cluster point of the sequence $\{u_n\}$ is a minimum point for $\Phi$. Since $u$ is the unique minimum of $\Phi$, it follows that $u$ is the unique cluster point of $\{u_n\}$.\\
We point out that, by standard techniques, we get that the unique solution $u_n$ of $(\overline{P_\varepsilon^n})$ is equibounded in $W^{1,p}(\Omega_\varepsilon^n,a_\varepsilon^n)$; moreover, from the properties of $a_\varepsilon^n(x,y)$, $u_n$ is also equibounded in $W^{1,p}(\Omega_\varepsilon^n)$.\\
We denote by $\bar u_n$ the trivial extension of $u_n$ to $\Omega^*$. Since $p\geq 2$, from the above results we have that
\begin{equation}\notag
\|\bar u_n\|_{H^1(\Omega^*)}\leq C,
\end{equation}
where $C$ is a positive constant independent from $n$ and $\varepsilon$. This implies that $\bar u_n$ admits a subsequence, which we still denote by $\bar u_n$, which converges to a function $u^*$ weakly in $H^1(\Omega^*)$ and strongly in $L^2(\Omega^*)$. From the uniqueness of the cluster point of the sequence $\{u_n\}$, we have that $u^*|_{\Omega}\equiv u$, hence we have that $\chi_{\Omega_\varepsilon^n}u_n\to u$ strongly in $L^2(\Omega^*)$ as $n\to+\infty$.
\end{proof}

\bigskip

\noindent {\bf Acknowledgements.} The author has been supported by the Gruppo Nazionale per l'Analisi Matematica, la Probabilit\`a e le loro
Applicazioni (GNAMPA) of the Istituto Nazionale di Alta Matematica (INdAM).\\
The author wishes to thank Professor M. R. Lancia for suggesting the problem and for the several stimulating discussions during the preparation of this paper. The author also wishes to thank the anonymous referees for their valuable comments.

\bigskip


\begin{thebibliography}{38}
\small{
\bibitem{aitmoussa} A. Ait Moussa, J. Messaho, {\em Limit behavior of an oscillating thin layer}, Proceedings of the 2005 Oujda International Conference on Nonlinear Analysis, Electron. J. Differ. Equ. Conf., 14 (2006), 21--33.

\bibitem{Att} H. Attouch, {\em Variational Convergence for Functions and Operators}, Pitman Advanced Publishing Program, Boston, 1984.

\bibitem{badialeserra} M. Badiale, E. Serra, \emph{Semilinear Elliptic Equations for Beginners. Existence Results via the Variational Approach}, Universitext, Springer, London, 2011.

\bibitem{baiocap} C. Baiocchi, A. Capelo, \emph{Variational and Quasivariational Inequalities: Applications to Free-Boundary Value Problems}, Wiley, New York, 1984.

\bibitem{bregil} F. Brezzi, G. Gilardi, \emph{Fundamentals of PDE for Numerical Analysis}, in: Finite Element Handbook (edited by H. Kardestuncer and D. H. Norrie), McGraw-Hill Book Co., New York, 1987.

\bibitem{captesi} R. Capitanelli, {\em Lagrangians on Homogeneous Spaces,} PhD Thesis, Department of Mathematics \lq\lq Guido Castelnuovo", Sapienza Universit\`{a} di Roma, 2002.

\bibitem{Ca02} R. Capitanelli, {\em Nonlinear energy forms on certain fractal curves}, J. Nonlinear Convex Anal., 3 (2002), 67--80.

\bibitem{capCPAA} R. Capitanelli, \emph{Robin boundary condition on scale irregular fractals}, Commun. Pure Appl. Anal., 9 (2010), 1221--1234.

\bibitem{capJMAA} R. Capitanelli, \emph{Asymptotics for mixed Dirichlet-Robin problems in irregular domains}, J. Math. Anal. Appl., 362 (2010), 450--459.

\bibitem{caplanconvex} R. Capitanelli, M. R. Lancia, {\em Nonlinear energy forms and Lipschitz spaces on the Koch curve}, J. Convex Anal., 9 (2002), 245--257.

\bibitem{capvivsiam} R. Capitanelli, M. A. Vivaldi, {\em Dynamical quasi-filling fractal layers}, SIAM J. Math. Anal., 48 (2016), 3931--3961.

\bibitem{nostromassimo} M. Cefalo, S. Creo, M. R. Lancia, P. Vernole, \emph{Nonlocal Venttsel' diffusion in fractal-type domains: regularity results and numerical approximation}, Math. Methods Appl. Sci., 42 (2019), 14, 4712--4733.

\bibitem{CLNFCAA} S. Creo, M. R. Lancia, A. I. Nazarov, \emph{Regularity results for nonlocal evolution Venttsel’ problems}, Fract. Calc. Appl. Anal., 23 (2020), 5, 1416--1430.

\bibitem{nostronazarov} S. Creo, M. R. Lancia, A. I. Nazarov, P. Vernole, \emph{On two-dimensional nonlocal Venttsel' problems in piecewise smooth domains}, Discrete Contin. Dyn. Syst. Series S, 12 (2019), 1, 57--64.

\bibitem{CPAA} S. Creo, M. R. Lancia, A. V\'elez-Santiago, P. Vernole, {\em Approximation of a nonlinear fractal energy functional on varying Hilbert spaces}, Commun. Pure Appl. Anal., 17 (2018), 2, 647--669.


\bibitem{frazionario} S. Creo, M. R. Lancia, P. Vernole, \emph{Convergence of fractional diffusion processes in extension domains}, J. Evol. Equ., 20 (2020), 1, 109--139.

\bibitem{miovalerio} S. Creo, V. Regis Durante, \emph{Convergence and density results for parabolic quasi-linear Venttsel' problems in fractal domains}, Discrete Contin. Dyn. Syst. Series S, 12 (2019), 1, 65--90.

\bibitem{emily} E. Evans, \emph{A finite element approach to $H^1$ extension using prefractals}, Adv. Math. Sci. Appl., 22 (2012), 2, 391--420.

%\bibitem{fabes}  E. B. Fabes, C. E. Kenig, R. P. Serapioni, \emph{The local regularity of solutions of degenerate elliptic equations}, Comm. Partial Differential Equations, 7 (1982), 77--116.

\bibitem{falconer} K. Falconer, \emph{The Geometry of Fractal Sets, 2nd ed.}, Cambridge University Press, 1990.

\bibitem{FLLM} A. Favini, R. Labbas,  K. Lemrabet, S. Maingot, {\em Study of the limit of transmission problems in a thin layer by the sum theory of linear operators}, Rev. Mat. Complut., 18 (2005), 143--176.

\bibitem{freiberg} U. R. Freiberg, M. R. Lancia, \emph{Energy form on a closed fractal curve}, Z. Anal. Anwendingen, 23 (2004), 115--137.

%\bibitem{grisvard} P. Grisvard, \emph{Elliptic Problems in Nonsmooth Domains}, Pitman, Boston, 1985.

\bibitem{Jones} P. W. Jones, {\em Quasiconformal mapping and extendability of functions in Sobolev spaces}, Acta Math., 147 (1981), 71--88.

\bibitem{jonssonlip} A. Jonsson, \emph{Brownian motion on fractals and function spaces}, Math. Z., 222 (1996), 495--504

\bibitem{JoWa} A. Jonsson, H. Wallin, {\em Function Spaces on Subsets of $\mathbb{R}^n$}, Part 1, Math. Reports, vol.2, Harwood Acad. Publ., London, 1984.

\bibitem{JoWa2} A. Jonsson, H. Wallin, {\em The dual of Besov spaces on fractals}, Studia Math., 112 (1995), 285--300.

%\bibitem{kantorovich}  L. V. Kantorovich, G. P. Akilov, \emph{Functional analysis}, second edition, Pergamon Press, Oxford-Elmsford, N.Y., 1982.

\bibitem{kozlov} S. M. Kozlov, {\em Harmonization and homogenization on fractals}, Comm. Math. Phys., 153 (1993), 339--357.
 
\bibitem{kusuoka} S. Kusuoka, {\em Lecture on diffusion processes on nested fractals}, In: Statistical Mechanics and Fractals, Lecture Notes in Mathematics, vol. 1567, Springer, Berlin, Heidelberg, 1993.

\bibitem{lan-zei} M. R. Lancia, {\em A transmission problem with a fractal interface,}  Z. Anal. Anwendungen, 21, (2002), 113--133.

\bibitem{lanmos} M. R. Lancia, U. Mosco, M. A. Vivaldi, \emph{Homogenization for conductive thin layers of pre-fractal type}, J. Math. Anal. Appl., 347 (2008), 354--369.

\bibitem{LRDV} M. R. Lancia, V. Regis Durante, P. Vernole, \emph{Asymptotics for Venttsel' problems for operators in non divergence form in irregular domains}, Discrete Contin. Dyn. Syst. Ser. S, 9 (2016), 1493--1520.

\bibitem{LVSV} M. R. Lancia, A. V\'elez-Santiago, P. Vernole, {\em Quasi-linear Venttsel' problems with nonlocal boundary conditions on fractal domains}, Nonlinear Anal. Real World Appl., 35 (2017), 265--291.

\bibitem{ambprodi} M. R. Lancia, A. V\'elez-Santiago, P. Vernole, {\em A quasi-linear nonlocal Venttsel' problem of Ambrosetti--Prodi type on fractal domains}, Discrete Cont. Dyn. Syst., 9 (2019), 4487--4518.

%\bibitem{La-Ve} M. R. Lancia, P. Vernole, \emph{Convergence results for parabolic transmission problems across highly conductive layers with small capacity}, Adv. Math. Sci. Appl., 16 (2006), 411--445.

\bibitem{LaVe2} M. R. Lancia, P. Vernole, {\em Semilinear fractal problems: approximation and regularity results}, Nonlinear Anal., 80 (2013), 216--232.

\bibitem{JEE} M. R. Lancia, P. Vernole, \emph{Venttsel' problems in fractal domains}, J. Evol. Equ., 14 (2014), 681--712.

\bibitem{lanviv2} M. R. Lancia, M. A. Vivaldi, {\em Lipschitz spaces and Besov traces on self-similar fractals}, Rend. Accad. Naz. Sci. XL Mem. Mat. Appl., 23 (1999), 101--116.

\bibitem{lanciaviv} M. R. Lancia, M. A. Vivaldi, \emph{Asymptotic convergence of transmission energy forms}, Adv. Math. Sci. Appl., 13 (2003), 315--341.

%\bibitem{mazyabad} V. G. Maz'ya, S. V. Poborchi, {\em Differentiable Functions on Bad Domains}, World Scientific Publishing Co., Inc., River Edge, NJ, 1997.

\bibitem{mosco1} U. Mosco, {\em Convergence of convex sets and solutions of variational inequalities,} Advances in Math., 3 (1969), 510--585.

\bibitem{mosco2} U. Mosco, {\em Composite media and asymptotic Dirichlet forms}, J. Funct. Anal., 123 (1994), 368--421.

\bibitem{moscoviv} U. Mosco, M. A. Vivaldi, \emph{Variational problems with fractal layers}, Rend. Accad. Naz. Sci. XL Mem. Mat. Appl., 27 (2003), 237--251.

\bibitem{moscovivgeo} U. Mosco, M. A. Vivaldi, \emph{An example of fractal singular homogenization}, Georgian Math. J., 14 (2007), 169--193.

\bibitem{MVARMA} U. Mosco, M. A. Vivaldi, \emph{Fractal reinforcement of elastic membranes}, Arch. Ration. Mech. Anal., 194 (2009), 1, 49--74.

\bibitem{vanishing} U. Mosco, M. A. Vivaldi, {\em Vanishing viscosity for fractal sets}, Discrete Contin. Dyn. Syst., 28 (2010), 1207--1235.

\bibitem{mosvivthin} U. Mosco, M. A. Vivaldi, \emph{Thin fractal fibers}, Math. Methods Appl. Sci., 36 (2013), 2048--2068.

\bibitem{moscoviv2015} U. Mosco, M. A. Vivaldi, \emph{Layered fractal fibers and potentials}, J. Math. Pures Appl., 103 (2015), 1198--1227.

\bibitem{muck} B. Muckenhoupt, \emph{Weighted norm inequalities for the Hardy maximal function}, Trans. Amer. Math. Soc., 165 (1972), 207--226.

\bibitem{necas} J. Necas, \emph{Les M\`ethodes Directes en Th\`eorie des \`Equationes Elliptiques}, Masson, Paris, 1967.

\bibitem{sanpal} H. Pham Huy, E. Sanchez-Palencia, {\em Ph\'enom\`enes des transmission \`a travers des couches minces de conductivit\'e \'elev\'ee}, J. Math. Anal. Appl., 47 (1974), 284--309.

}

\end{thebibliography}
\end{document}